\documentclass[11pt,british]{amsart}
\pdfoutput=1

\makeatletter
\@namedef{subjclassname@2020}{
  \textup{2020} Mathematics Subject Classification}
\makeatother

\usepackage[T1]{fontenc}
\usepackage[utf8]{inputenc}
\usepackage{babel}
\usepackage{csquotes}
\usepackage{microtype}

\setlength{\textwidth}{\paperwidth}
\addtolength{\textwidth}{-3in}
\setlength{\textheight}{\paperheight}
\addtolength{\textheight}{-2.2in}
\calclayout

\usepackage{amssymb}
\usepackage{mathtools}
\usepackage[margin=0pt]{subfig}

\newcommand{\N}{\mathbb{N}}
\newcommand{\asd}{\dim_{\mathrm{A}}}
\newcommand{\Rd}{\mathbb{R}^d}
\newcommand{\ubd}{\overline{\dim}_{\mathrm{B}}}
\newcommand{\lbd}{\underline{\dim}_{\mathrm{B}}}
\newcommand{\uid}{\overline{\dim}_{\theta}}
\newcommand{\lid}{\underline{\dim}_{\theta}}
\newcommand{\eps}{\varepsilon}

\theoremstyle{plain}
\newtheorem{theorem}{Theorem}[section]
\newtheorem{lemma}[theorem]{Lemma}
\newtheorem{corollary}[theorem]{Corollary}
\newtheorem{proposition}[theorem]{Proposition}

\numberwithin{equation}{section}

\usepackage[backend=biber,mincrossrefs=9,doi=false,
url=false,isbn=false,
date=year,
giveninits=true,maxnames=99,maxalphanames=99,
sortcites,
hyperref=true,backref=true]{biblatex}
\renewbibmacro{in:}{
  \ifentrytype{article}{}{\printtext{\bibstring{in}\intitlepunct}}}
\DeclareFieldFormat[article,periodical]{volume}{\mkbibbold{#1}}
\DeclareFieldFormat[article,periodical,unpublished]{citetitle}{#1}
\DeclareFieldFormat[article,periodical,unpublished]{title}{#1}
\DeclareFieldFormat{pages}{#1}
\DeclareFieldFormat[inproceedings]{title}{#1\isdot}
\DeclareFieldFormat[misc]{title}{#1\isdot}
\DeclareFieldFormat[misc,article]{note}{#1\addcomma}

\AtEveryBibitem{
  \clearlist{language}
}
\AtEveryBibitem{
  \clearfield{number}}
\addbibresource{Pyramid.bib}

\title{Dimensions of popcorn-like pyramid sets}

\author{Amlan Banaji}

\author{Haipeng Chen}

\begin{document}

\begin{abstract}
This article concerns the dimension theory of the graphs of a family of functions which include the well-known `popcorn function' and its pyramid-like higher-dimensional analogues. We calculate the box and Assouad dimensions of these graphs, as well as the intermediate dimensions, which are a family of dimensions interpolating between Hausdorff and box dimension. As tools in the proofs, we use the Chung--Erd\H{o}s inequality from probability theory, higher-dimensional Duffin--Schaeffer type estimates from Diophantine approximation, and a bound for Euler's totient function. As applications we obtain bounds on the box dimension of fractional Brownian images of the graphs, and on the H\"older distortion between different graphs. 
\end{abstract}

\keywords{popcorn function, box dimension, Assouad dimension, intermediate dimensions}

\subjclass[2020]{28A80 (Primary), 11B57 (Secondary)}

\maketitle

\section{Introduction and results}

The \emph{popcorn function}, also known as \emph{Thomae's function}, is an important example in real analysis. It has many interesting properties, such as being Riemann integrable despite not being continuous on any open interval. In fact, it is discontinuous at the rationals but continuous at the irrationals. There are several intriguing connections between the popcorn function and different areas of mathematics~\cite{Gorodetski2022,Reeder2021,Copar2020} and computer science~\cite{Shu2017}. In this article, as well as working with the popcorn function itself, we will consider the following higher-dimensional generalisations of it. 
Throughout the paper, $d$ will denote an integer with $d \geq 2$, and $0 < t < \infty$. Then the popcorn pyramid function $f_{t,d} \colon [0,1]^{d-1} \to \mathbb{R}$ is defined by 
\begin{equation}
f_{t,d}(x) = 
\begin{cases}
q^{-t} & \text{if } x=(\frac{p_1}{q},\ldots,\frac{p_{d-1}}{q}), q \in \N, p_i \in \{1,\ldots,q-1\}, \gcd{(p_i,q)} = 1 \, \forall i, \\
0 & \text{otherwise}.
\end{cases}
\end{equation}
The popcorn function itself is $f_{1,2}$. Note that the function is 0 unless all numbers in the product have the same denominator, for example in the $d=3$ case ${f_{t,3}(1/2,1/3)=0}$. The graphs of the functions are denoted by 
\[
G_{t,d} \coloneqq \left\{ (x,f_{t,d}(x)) : x\in [0,1]^{d-1} \right\}. 
\]
Two of the graphs in the $d=2$ case are shown in Figure~\ref{f:sets}; the graph on the left is that of the popcorn function. 
\begin{figure}[t]
\subfloat[The popcorn graph $G_{1,2}$]{\includegraphics[width=.4\textwidth]{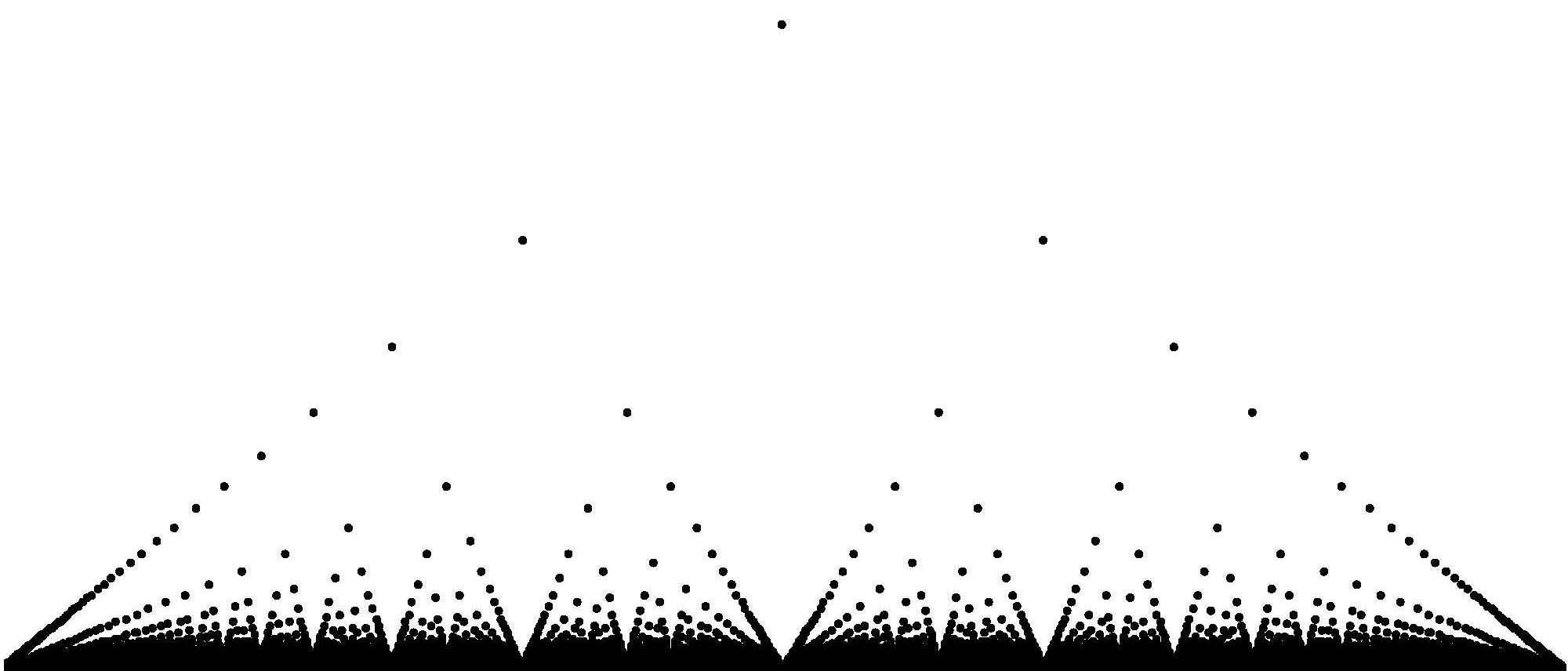}}
\qquad
\subfloat[$G_{0.3,2}$]{\includegraphics[width=.4\textwidth]{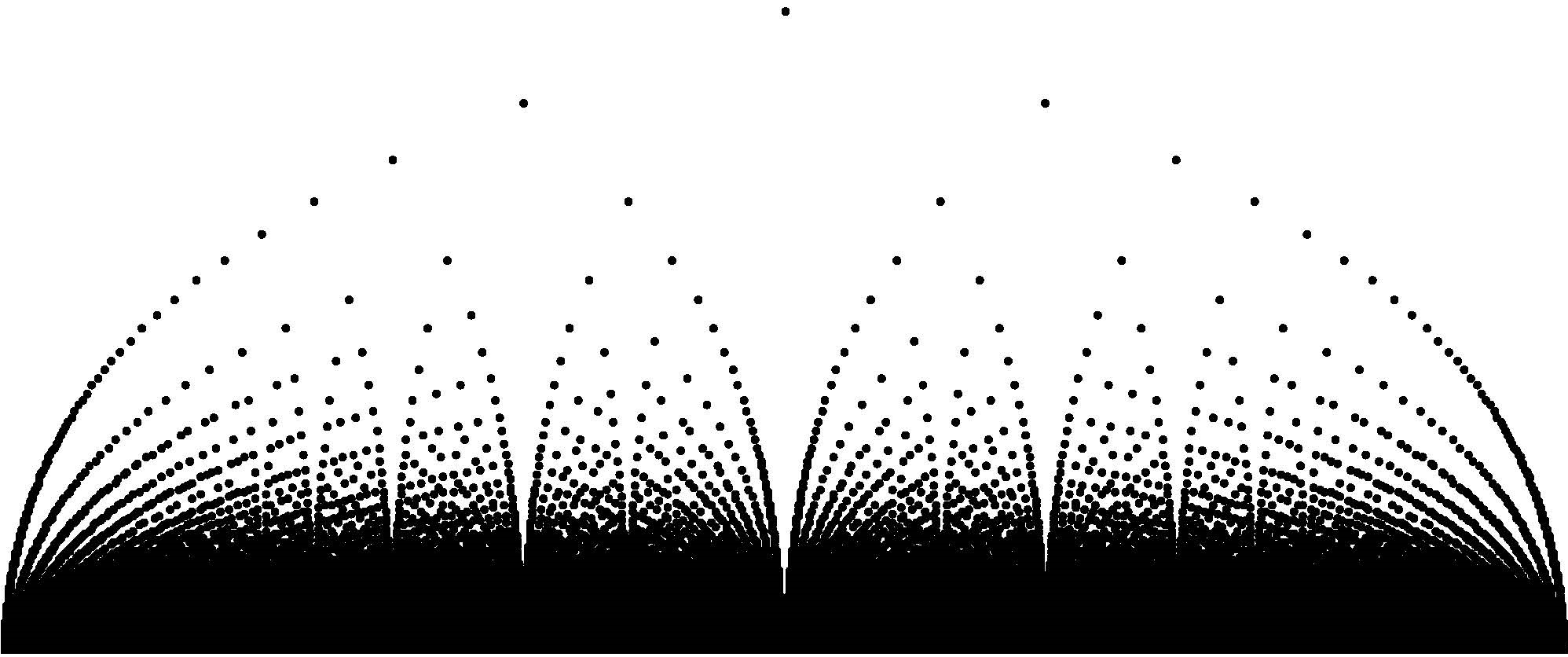}}
\caption{Two popcorn-like graphs}\label{f:sets}
\end{figure}
For completeness, we also include the full set in our analysis, since we will see that the corresponding sets have the same dimensions: 
\[
F_{t,d} \coloneqq \left\{ \, \left( \frac{p_1}{q},\ldots, \frac{p_{d-1}}{q}, \left(\frac{1}{q}\right)^t \right) : q \in \N, p_i \in \{1,\ldots, q-1\} \, \forall i \, \right\} \cup ([0,1]^{d-1} \times \{0\} ).
\]
Then $G_{t,d} \subset F_{t,d} \subset [0,1]^d$; for example, the convex hull of $G_{1,3}$ (or $F_{1,3}$) is a square-based pyramid in $\mathbb{R}^3$, and $(1/2,1/2,1/4) \in  F_{1,3} \setminus G_{1,3}$. 
The sets $G_{t,d}$ and $F_{t,d}$ have an interesting fractal structure, and it is natural to consider different notions of dimension of these sets. This was done in the case $d=2$ in~\cite{Chen2021popcorn,Chen2022tpopcorn}. For an overview of fractal geometry and dimension theory we refer the reader to~\cite{Falconer2014,Fraser2020book}. We will give the formal definitions of the notions of dimension in Section~\ref{s:preliminaries}, prove lower bounds for box dimension in Section~\ref{s:proof1}, and complete the proof of Theorems~\ref{t:assouad} and~\ref{t:int} in Section~\ref{s:proof2}.

Three of the most well-studied notions of fractal dimension are Hausdorff, box and Assouad dimension, which are always ordered 
\begin{equation}\label{e:threedims}
 \dim_{\mathrm H} F \leq \dim_{\mathrm B} F \leq \asd F. \end{equation}
The Hausdorff dimension of $F_{t,d}$ (or $G_{t,d}$) is trivially~$d-1$, since Hausdorff dimension is countably stable. 
The Assouad dimension describes the fine-scale structure of the `thickest' part of the set $F$; for an overview of the Assouad dimension in the context of fractal geometry, we refer the reader to~\cite{Fraser2020book}. We prove that the Assouad dimension of the popcorn-like pyramids are as follows. 
\begin{theorem}\label{t:assouad}
We have 
\[ 
\dim_{\mathrm A} G_{t,d} = \dim_{\mathrm A} F_{t,d} = \begin{cases} d, & 0 < t < d/(d-1), \\
d-1, & t \geq d/(d-1). \end{cases} 
\]
\end{theorem}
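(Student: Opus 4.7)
The plan is to establish four inequalities: the trivial upper bound $\asd F_{t,d} \leq d$ from $F_{t,d} \subseteq [0,1]^d$; the lower bound $\asd G_{t,d} \geq d-1$ from the fact that the closure $\overline{G_{t,d}}$ contains $[0,1]^{d-1}\times\{0\}$ (since $f_{t,d}$ vanishes on the dense set of irrationals) and that Assouad dimension is invariant under closure in $\mathbb{R}^d$; and then the two nontrivial bounds $\asd F_{t,d} \leq d-1$ when $t \geq d/(d-1)$ and $\asd G_{t,d} \geq d$ when $t < d/(d-1)$. Together with the monotonicity $\asd G_{t,d}\leq\asd F_{t,d}$, these give the theorem.

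For the upper bound ($t \geq d/(d-1)$), I would directly estimate the $r$-covering number of $F_{t,d}\cap B(z_0,R)$ for $z_0\in F_{t,d}$ and $0<r<R$. The set splits into the bottom face $[0,1]^{d-1}\times\{0\}$, which needs $\lesssim (R/r)^{d-1}$ $r$-balls, plus levels of grid points at heights $q^{-t}$ for $q\geq R^{-1/t}$. Grouping these dyadically into blocks $q\in[Q,2Q]$, each block has at most $\lesssim R^{d-1}Q^d$ grid points, occupying a vertical strip of thickness $\asymp Q^{-t}$, and so its cover is bounded by $\min\bigl(R^{d-1}Q^d,\, Q^{-t} r^{-1}(R/r)^{d-1}\bigr)$. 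Summing over $Q\in[R^{-1/t},1/r]$ and absorbing the tail $Q>1/r$ into a single slab of height $\leq r$ yields a total cover of order $R^{d-1}r^{-d^2/(d+t)}+(R/r)^{d-1}$. The algebraic inequality $d^2/(d+t)\leq d-1$ is equivalent to $t\geq d/(d-1)$, so in this regime the head term is also $\lesssim (R/r)^{d-1}$, giving $\asd F_{t,d}\leq d-1$.

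The main obstacle is the matching lower bound $\asd G_{t,d}\geq d$ when $t<d/(d-1)$. Heuristically, choosing $z_0=(x_0,0)$ and linked scales with $R\asymp r^{dt/(d+t)}$ --- so that $R\to 0$ while $R/r\to\infty$ precisely for $t<d/(d-1)$ --- the grid points of $G_{t,d}$ from levels $q\gtrsim r^{-d/(d+t)}$ should produce $\asymp (R/r)^d$ mutually $r$-separated points in $B(z_0,R)$, mirroring the cover computation above. Turning this into a rigorous lower bound requires choosing $x_0\in[0,1]^{d-1}$ near which, at every relevant scale, there really are many rational approximants of the required denominator satisfying the coprimality conditions that define $G_{t,d}$. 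The plan is to combine three ingredients: (i)~a higher-dimensional Duffin--Schaeffer-type estimate from metric Diophantine approximation, quantifying the density of coprime approximants $(p_1/q,\ldots,p_{d-1}/q)$ to a generic $x_0$; (ii)~the Chung--Erd\H{o}s inequality, which upgrades first-moment plus pairwise-correlation bounds into a pointwise lower bound on the counting function of such approximants; and (iii)~the classical estimate $\phi(q)\gtrsim q/\log\log q$, which ensures that restricting from all grid points of $F_{t,d}$ to those of $G_{t,d}$ costs only a polylogarithmic factor, harmless for Assouad dimension. The technical crux is coordinating these estimates so that the packing lower bound holds at a single center $x_0$ simultaneously across an unbounded sequence of scales $r\to 0$, rather than merely in a measure-theoretic sense, and this uniformity is where the Diophantine and probabilistic machinery are really needed.
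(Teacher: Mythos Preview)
Your treatment of the case $t\geq d/(d-1)$ is correct and close to the paper's, though the paper uses a simpler two-part split (points with last coordinate above or below $r$) rather than a full dyadic decomposition over $Q$; your dyadic argument works too, apart from the harmless slip that the tail should start at $Q\gtrsim r^{-1/t}$ rather than $Q>1/r$.

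For the case $t<d/(d-1)$ your route differs substantially from the paper's. The paper does \emph{not} attempt a direct Assouad lower bound. Instead it first proves the global box-dimension lower bound $\lbd G_{t,d}\geq d^2/(d+t)$ --- this is where Chung--Erd\H{o}s, the higher-dimensional Duffin--Schaeffer estimate, and the totient bound are deployed, but only to control the \emph{global} covering number $N_\delta(G_{t,d})$, with no localisation. It then combines this with the easy upper bound $\overline{\dim}_{\theta_0} F_{t,d}\leq d-1$ at $\theta_0=(d-1)t/d$ and Banaji's general inequality
\[
\overline{\dim}_\theta F\ \geq\ \frac{\theta\,\asd F\,\ubd F}{\asd F-(1-\theta)\,\ubd F},
\]
whose right-hand side is strictly decreasing in $\asd F$. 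Plugging in $\theta=\theta_0$ and $\ubd F=d^2/(d+t)$ makes the right-hand side equal to $d-1$ precisely when $\asd F=d$, so $\asd F<d$ would force $\overline{\dim}_{\theta_0} F>d-1$, a contradiction.

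Your stated ``technical crux'' is slightly misdiagnosed: an Assouad lower bound does not require a single centre that works at all scales --- one may take a different $x_k$ for each pair $(r_k,R_k)$. What \emph{is} genuinely delicate in your direct plan is the localisation at a \emph{fixed} scale: the Chung--Erd\H{o}s/Duffin--Schaeffer machinery gives you the $\delta$-covering number of a whole horizontal layer, and a crude pigeonhole into $R$-cubes applied to a single layer does not recover the full exponent $d$ within the parameter range where the layer estimates are valid. One would need either to sum many layers inside the \emph{same} $R$-cube (which requires knowing that the dense regions of different layers coincide) or to prove some equidistribution. The paper's indirect argument sidesteps all of this by passing through the global box dimension.
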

It follows from Theorem~\ref{t:assouad} and~\eqref{e:threedims} that if $t\geq d/(d-1)$ then all three notions of dimension in~\eqref{e:threedims} are equal to $d-1$, so in all other results in this paper we assume that $t < d/(d-1)$. 
Box dimension describes the scaling behaviour of the smallest number of balls of a given size needed to cover the set. 
\begin{theorem}\label{t:box}
If $0 < t < d/(d-1)$ then 
\[ \dim_{\mathrm B} G_{t,d} = \dim_{\mathrm B} F_{t,d} = \frac{d^2}{d + t}. \]
\end{theorem}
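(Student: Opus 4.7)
Since $G_{t,d} \subseteq F_{t,d}$, it is enough to prove the upper bound $\dim_{\mathrm{B}} F_{t,d} \leq s$ together with the matching lower bound $\dim_{\mathrm{B}} G_{t,d} \geq s$, where I set $s \coloneqq d^2/(d+t)$. Throughout, $\delta \in (0,1)$ denotes the covering scale and $N_\delta$ the least number of $\delta$-cubes needed.

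For the upper bound I would exhibit an explicit cover of $F_{t,d}$. The base $[0,1]^{d-1}\times\{0\}$ contributes $O(\delta^{-(d-1)})$ cubes, and the hypothesis $t < d/(d-1)$ is equivalent to $d-1 \leq s$, so this base term is absorbed. The pyramid points at height $q^{-t}$ I would group into horizontal $\delta$-strips: a strip at height near $q^{-t}$ captures roughly $\max(1,\delta q^{t+1})$ consecutive levels, each carrying at most $q^{d-1}$ points on a horizontal grid of spacing $1/q$, so the strip is coverable by at most $\min\bigl(q^{d-1}\max(1,\delta q^{t+1}),\,\delta^{-(d-1)}\bigr)$ cubes. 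The two regimes of the minimum balance at the critical denominator $q_1 \sim \delta^{-d/(d+t)}$, and a short geometric-series computation yields $N_\delta(F_{t,d}) \lesssim \delta^{-s}$.

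The lower bound on $G_{t,d}$ is where the number-theoretic input becomes essential. The difficulty is that at the critical scale the horizontal spacing $1/q_1 = \delta^{d/(d+t)}$ is strictly smaller than $\delta$ for $d \geq 2$, so coprime points at a single level are not $\delta$-separated, and many levels $q \in [Q,2Q]$ with $Q \sim q_1$ crowd into a vertical band of height $\sim Q^{-t}$, competing for the same cubes. Fixing such a dyadic range, I would let $G' \subseteq G_{t,d}$ be the set of coprime points at these levels and apply the Cauchy--Schwarz / Chung--Erd\H{o}s inequality in the form
\[
N_\delta(G_{t,d}) \;\geq\; \frac{|G'|^2}{\sum_C |C\cap G'|^2},
\]
the sum being over the cubes $C$ of a fixed $\delta$-grid. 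The first moment $|G'| = \sum_{q\in[Q,2Q]} \varphi(q)^{d-1}$ is $\gtrsim Q^d/(\log\log Q)^{d-1}$ by the standard lower bound $\varphi(q) \gg q/\log\log q$. The second moment $\sum_C |C\cap G'|^2$ counts ordered pairs of coprime rationals $p/q,p'/q'$ at levels in $[Q,2Q]$ lying in a common $\delta$-cube, equivalently those with $\|p/q - p'/q'\|_\infty \lesssim \delta$ and $|q^{-t}-q'^{-t}| \lesssim \delta$; bounding this count by $O(|G'|)$ amounts to a higher-dimensional Duffin--Schaeffer-type estimate, which reduces to counting integer solutions of $\|qp' - q'p\|_\infty \lesssim qq'\delta$ under coprimality constraints. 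Substituting then gives $N_\delta(G_{t,d}) \gtrsim \delta^{-s}$ up to polylogarithmic factors, which is harmless for box dimension.

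The principal obstacle is therefore the second-moment estimate. A first-moment bound alone cannot exclude severe clustering that would collapse the occupied-cube count well below $\delta^{-s}$; only the genuinely higher-dimensional Diophantine count of near-collisions between coprime rationals at different denominators, paired with the Euler-totient lower bound that preserves the first moment up to $\log\log$ losses, pins the number of occupied cubes to the correct fractional order $\delta^{-s}$.
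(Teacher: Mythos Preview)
Your proposal is correct and takes essentially the same route as the paper. The upper bound matches the $\theta=1$ case of the covering argument in Lemma~\ref{l:upper}, and your lower bound uses exactly the same three ingredients as Section~\ref{s:proof1} --- Chung--Erd\H{o}s/Cauchy--Schwarz, the totient bound $\varphi(q)\gg q/\log\log q$, and the higher-dimensional Duffin--Schaeffer intersection estimate --- the only cosmetic difference being that the paper applies Chung--Erd\H{o}s layer by layer in Lebesgue-measure form and then sums over layers, whereas you apply it once to a single dyadic block of denominators in discrete point-count form (your second-moment bound is really $O(Q^d)$ rather than $O(|G'|)$, but since $|G'|\asymp Q^d$ up to $\log\log$ factors this is harmless).
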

\begin{proof}
This follows by setting $\theta=1$ in the more general result Theorem~\ref{t:int} below. 
\end{proof}
Note that $\dim_{\mathrm B} G_{t,d}$ is continuous in $t$ but $\asd G_{t,d}$ is not, since Assouad dimension depends sensitively on the local scaling behaviour of the set. 
Although our proof of Theorem~\ref{t:box} is to observe that it follows from Theorem~\ref{t:int}, we do not call it a corollary because perhaps the most challenging part of the proof of Theorem~\ref{t:int} is to establish lower bounds for box dimension. We do this directly in Section~\ref{s:proof1} using a similar strategy to the $d=2$ case in~\cite{Chen2021popcorn,Chen2022tpopcorn}. Indeed, to bound the covering number, we make use of the Chung--Erd\H{o}s inequality from probability theory. One part of the resulting expression can be bounded using the estimate $\phi(n) \gtrsim n/(\log \log n)$ for Euler's totient function $\phi(n)$ (defined in~\eqref{e:defineeuler}). 
Another part is bounded using methods related to Diophantine approximation. Indeed, in the paper~\cite{Duffin1941schaeffer} in which Duffin and Schaeffer formulated their famous conjecture (which was recently proved by Koukoulopoulos and Maynard~\cite{Koukoulopoulos2020}), they also established bounds for intersections of sets of numbers which can be approximated by rationals in a certain way. For our results in the $d>2$ case, we use bounds on higher-dimensional Diophantine approximation sets which can be deduced from the bounds in~\cite{Duffin1941schaeffer}. These bounds are also noted by Pollington and Vaughan in~\cite{Pollington1990duffinschaeffer}, where they prove the higher-dimensional version of the Duffin--Schaeffer conjecture (which is substantially less challenging than the proof of the one-dimensional version).

A topic which has generated significant interest in recent years is dimension interpolation. The idea is to consider two different notions of dimension and find a geometrically meaningful family of dimensions that lies between them. For a survey of this topic, we refer the reader to~\cite{Fraser2021interpolating}. 
One example of dimension interpolation is the Assouad spectrum, which lies between the box and Assouad dimensions. The Assouad spectrum of the popcorn graph and the sets $G_{t,2}$ and $F_{t,2}$ has been computed in~\cite{Chen2021popcorn,Chen2022tpopcorn}. A natural question would therefore be to calculate the Assouad spectrum of the sets $G_{t,d}$ and $F_{t,d}$ as a function of $d$, $t$ and $\theta$. 
The spectrum of dimensions which we will consider in this paper, however, is the \emph{intermediate dimensions}, introduced in~\cite{Falconer2020firstintermediate}. 
These depend on a parameter $\theta \in [0,1]$ and interpolate between Hausdorff dimension ($\theta = 0$) and box dimension ($\theta = 1$). The motivation for these dimensions comes from the observation that the box dimension is defined by covering the set with balls of a fixed size, whereas for the Hausdorff dimension the covering sets can have very different sizes. 
The intermediate dimensions are defined by covering the set in question with sets whose diameter lies in intervals of the form $[\delta^{1/\theta},\delta]$ for a small number $\delta > 0$.
 A precise characterisation of the attainable forms of intermediate dimensions of sets has been obtained in~\cite{Banaji2021moran}; for a survey of the intermediate dimensions, we refer the reader to~\cite{Falconer2021intdimsurvey}. 
The following result extends Theorem~\ref{t:box}. 
\begin{theorem}\label{t:int}
If $0 < t < d/(d-1)$ then 
\[ 
\dim_{\theta} G_{t,d} = \dim_{\theta} F_{t,d} =\begin{cases}
d-1, & 0 \leq \theta \leq \frac{(d-1)t}{d}, \\
\frac{d^2 \theta}{d\theta + t}, & \frac{(d-1)t}{d} < \theta \leq 1.
\end{cases}
\] 
\end{theorem}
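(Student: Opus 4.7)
The plan is to prove matching upper and lower bounds on $\dim_\theta$; since $G_{t,d} \subseteq F_{t,d}$, I will establish the upper bound for $F_{t,d}$ and the lower bound for $G_{t,d}$, yielding equality. In the regime $\theta > (d-1)t/d$, set $\alpha = dt/(d\theta+t) \in (0,1]$; otherwise set $\alpha = 1$.

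For the upper bound, I would construct, for each small $\delta > 0$, a two-scale cover of $F_{t,d}$ consisting of: (i) a grid of $\asymp \delta^{\alpha-d}$ axis-parallel cubes of side $\delta$ covering the slab $[0,1]^{d-1} \times [0,\delta^\alpha]$, which captures every spike with $q \geq \delta^{-\alpha/t}$; and (ii) a single ball of diameter $\delta^{1/\theta}$ around each of the remaining $\asymp \delta^{-d\alpha/t}$ spikes (those with $q < \delta^{-\alpha/t}$). All cover elements have diameter in $[\delta^{1/\theta}, \delta]$, and the covering sum
\[
\sum_i |U_i|^s \lesssim \delta^{s+\alpha-d} + \delta^{s/\theta - d\alpha/t}
\]
is bounded for $s = d - \alpha$: both exponents vanish in the non-trivial regime, giving $\uid F_{t,d} \leq d^2\theta/(d\theta+t)$; and in the trivial regime (with $\alpha = 1$) the choice $s = d-1$ makes the first exponent vanish and the second non-negative precisely when $\theta \leq (d-1)t/d$.

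For the lower bound, monotonicity and the inclusion $[0,1]^{d-1} \times \{0\} \subseteq G_{t,d}$ give $\lid G_{t,d} \geq d - 1$, which handles the trivial regime. For $\theta > (d-1)t/d$, I would adapt the covering-number estimate of Section~\ref{s:proof1} in a scale-by-scale fashion. Given any cover $\{U_i\}_i$ with $|U_i| \in [\delta^{1/\theta}, \delta]$, partition the $U_i$ into dyadic size classes $\mathcal{C}_k = \{U_i : |U_i| \asymp 2^{-k}\}$. For each dyadic scale $r \in [\delta^{1/\theta}, \delta]$, apply the Chung--Erd\H{o}s/Duffin--Schaeffer-based covering lower bound from Section~\ref{s:proof1} (in the same spirit as for box dimension, but restricted to the horizontal slab of spikes with heights in $[r, 2r]$) to lower-bound the total number of cover elements of diameter at least $r$. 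Combining these per-scale estimates with suitable dual weights across $r \in [\delta^{1/\theta}, \delta]$ and optimising yields $\sum_i |U_i|^s \geq c > 0$ for any $s < d^2\theta/(d\theta+t)$.

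The principal obstacle will be the lower bound in the non-trivial regime. The upper bound is a routine multi-scale optimisation, and the Hausdorff-dimensional lower bound is immediate; but extracting the exact exponent $d^2\theta/(d\theta+t)$ — rather than some weaker interpolation between $d-1$ and the box dimension $d^2/(d+t)$ — requires the per-scale Chung--Erd\H{o}s estimate to be genuinely sharp and to be combined with scale weights that mirror the $\alpha$-optimisation of the upper bound. The main subtlety lies in the fact that a single large cover element simultaneously intersects spikes at many distinct heights, so the book-keeping must keep careful track of these contributions across scales.
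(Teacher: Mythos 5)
Your upper bound is essentially the paper's argument: split $F_{t,d}$ at height $\delta^{\alpha}$ with $\alpha=dt/(d\theta+t)$, cover the bottom slab with $\delta$-cubes and the remaining spikes pointwise with $\delta^{1/\theta}$-balls, and check that the covering sum is bounded at $s=d-\alpha=d^2\theta/(d\theta+t)$. This is exactly Lemma~\ref{l:upper} and it is correct. (A small inaccuracy in the trivial regime: $[0,1]^{d-1}\times\{0\}$ is not literally contained in $G_{t,d}$ since the rational points with equal denominators are missing, but $G_{t,d}$ does contain the graph over the complementary set of $x$, which still has Hausdorff dimension $d-1$, so the conclusion $\lid G_{t,d}\geq d-1$ stands.)

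Your lower bound in the non-trivial regime is where you diverge from the paper, and where you have a genuine gap. You propose to rerun the Chung--Erd\H{o}s/Duffin--Schaeffer machinery per dyadic scale $r\in[\delta^{1/\theta},\delta]$ and then combine scales via ``dual weights'' and ``optimisation''; you yourself flag that a single large cover element meets spikes at many heights and that this makes the book-keeping delicate. That difficulty is real and your sketch does not resolve it. The paper avoids it entirely by a much simpler reduction (Lemma~\ref{l:lower}): given a putative cover $\{U_i\}$ with $|U_i|\in[\delta^{1/\theta},\delta]$ and $\sum|U_i|^s\leq 1$ for some $s<d^2\theta/(d\theta+t)$, fix the \emph{single} intermediate scale $\delta^\beta$ with $\beta=(d+t)/(d\theta+t)$, fatten every $U_i$ smaller than $\delta^\beta$ up to that size, and break every larger $U_j$ of size $\delta^{\gamma_j}$ into $\lesssim\delta^{-d(\beta-\gamma_j)}$ balls of size $\delta^{\beta}$; the constraint $\sum|U_i|^s\leq 1$ then bounds the total number of $\delta^\beta$-balls by $(\delta^\beta)^{-\max\{s/(\theta\beta),\,(s+d\beta-d)/\beta\}}$, and both exponents are strictly below $d^2/(d+t)$, contradicting the box-dimension lower bound of Proposition~\ref{l:box}. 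In other words, the missing idea is that one does not need a new multi-scale covering argument at all: the already-established box dimension bound at a single well-chosen scale does the work, and the nonuniform-to-uniform conversion handles precisely the cross-scale contributions you worried about. Without that reduction (or an equally concrete replacement for the ``dual weights'' step), your lower bound is not a proof.
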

We see that after the phase transition at $\theta = (d-1)t/d$, the graph of the intermediate dimensions is analytic and strictly concave. 
One simple way to complete the proof is to make use of Banaji's general bounds~\eqref{e:generalbound} and~\eqref{e:generallower}, but we also give a direct proof. 
As a special case of Theorem~\ref{t:int}, we obtain a formula for the intermediate dimensions of the graph of the popcorn function. 
\begin{corollary}
\[ \dim_{\theta} G_{1,2} = \dim_{\theta} F_{1,2} = \begin{cases}
1 & 0 \leq \theta \leq 1/2 \\
\frac{4 \theta}{2\theta + 1} & 1/2 < \theta \leq 1
\end{cases} 
\] 
\end{corollary}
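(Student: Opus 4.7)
The corollary is simply the $(d,t)=(2,1)$ specialisation of Theorem~\ref{t:int}, so my plan is to substitute these values into the theorem and check that the resulting arithmetic reproduces the piecewise formula in the corollary; no new estimates are needed.

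First I would verify that the hypothesis $0 < t < d/(d-1)$ of Theorem~\ref{t:int} is satisfied: with $d=2$ and $t=1$ this reads $0 < 1 < 2$, which holds. Then I would identify the phase-transition threshold predicted by the theorem: $(d-1)t/d = 1/2$, matching the breakpoint $\theta = 1/2$ in the corollary. On the low-$\theta$ branch the theorem gives the constant value $d-1 = 1$, and on the high-$\theta$ branch it gives
\[ \frac{d^2 \theta}{d \theta + t} \;=\; \frac{4\theta}{2\theta + 1}, \]
both in exact agreement with the piecewise expression stated. The simultaneous assertion for $G_{1,2}$ and $F_{1,2}$ is already built into Theorem~\ref{t:int}, so no extra work is needed to deal with the two sets in parallel.

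Because the corollary is a direct specialisation, I do not expect any substantive obstacle: all the difficult work — the Chung--Erd\H{o}s covering argument, the lower bound $\phi(n) \gtrsim n/\log\log n$ for Euler's totient, and the higher-dimensional Duffin--Schaeffer type estimates used in Section~\ref{s:proof1} and Section~\ref{s:proof2} — has already been carried out in proving Theorem~\ref{t:int}. The only sanity check worth writing down explicitly is the algebraic simplification $d^2\theta/(d\theta+t) = 4\theta/(2\theta+1)$ at $(d,t)=(2,1)$, which is immediate.
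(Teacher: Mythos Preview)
Your proposal is correct and matches the paper's approach exactly: the paper's proof consists of the single sentence ``This is immediate from Theorem~\ref{t:int}.'' Your substitution of $(d,t)=(2,1)$ and the routine arithmetic checks are precisely what that sentence unpacks to.
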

\begin{proof}
This is immediate from Theorem~\ref{t:int}.
\end{proof}

It was shown in~\cite{Falconer2020firstintermediate} that the maps $\theta \mapsto \uid F$ and $\theta \mapsto \lid F$ are always continuous for $\theta \in (0,1]$, but for some sets $F$ they are not continuous at $\theta = 0$. Continuity at $\theta = 0$ has been shown in~\cite{Burrell2022brownian,Burrell2021projections} to have powerful consequences. In particular, in the following corollary of Theorem~\ref{t:int}, we apply results of Burrell~\cite{Burrell2022brownian} to give bounds for the box dimension of images of the sets $G_{t,d}$ and $F_{t,d}$ under fractional Brownian motion (which is a stochastic process defined and studied in Kahane's classical text~\cite{Kahane1985}). 
Falconer~\cite{Falconer2021} has explicitly computed the intermediate dimensions of fractional Brownian images of certain sequence sets. 
\begin{corollary}
Fix $d \in \mathbb{N}$ with $d \geq 2$, $0 < t < d/(d-1)$ and $\alpha > (d-1)/d$. 
If $B_\alpha \colon \Rd \to \Rd$ is index-$\alpha$ fractional Brownian motion then 
\begin{align*}
\dim_{\mathrm H} B_\alpha(G_{t,d}) &= \dim_{\mathrm H} B_\alpha(F_{t,d}) = (d-1)/\alpha \\*
\ubd B_\alpha(G_{t,d}) &\leq \ubd B_\alpha(F_{t,d}) < d
\end{align*} 
hold almost surely. 
\end{corollary}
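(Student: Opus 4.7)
My plan is to treat the two statements of the corollary separately, combining Theorem~\ref{t:int} and the elementary observation (already noted in the paper) that $\dim_{\mathrm H} G_{t,d} = \dim_{\mathrm H} F_{t,d} = d-1$ with standard results on dimensions of fractional Brownian images.

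For the Hausdorff equalities, I would apply the classical formula of Kahane (see~\cite{Kahane1985}): for any Borel $E \subseteq \Rd$,
\[
\dim_{\mathrm H} B_\alpha(E) = \min\{d, \alpha^{-1} \dim_{\mathrm H} E\} \quad \text{almost surely}.
\]
Since $\dim_{\mathrm H} G_{t,d} = \dim_{\mathrm H} F_{t,d} = d-1$ (the lower bound from the inclusion of $[0,1]^{d-1}\times\{0\}$, up to a countable set, into each of the graphs, and the upper bound from countable stability), and since the hypothesis $\alpha > (d-1)/d$ forces $(d-1)/\alpha < d$, Kahane's formula delivers $\dim_{\mathrm H} B_\alpha(G_{t,d}) = \dim_{\mathrm H} B_\alpha(F_{t,d}) = (d-1)/\alpha$ immediately.

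For the box-dimension line, the inequality $\ubd B_\alpha(G_{t,d}) \leq \ubd B_\alpha(F_{t,d})$ is just monotonicity of upper box dimension applied to $G_{t,d} \subseteq F_{t,d}$. For the strict inequality $\ubd B_\alpha(F_{t,d}) < d$, I would invoke Burrell's estimate from~\cite{Burrell2022brownian}, which bounds $\ubd B_\alpha(E)$ almost surely by an infimum (over $\theta \in (0,1]$) of an expression involving $\uid E$ and constants depending on $d$ and $\alpha$. The feature of this bound needed here is that it falls strictly below $d$ as soon as $\uid E$ is strictly smaller than $\alpha d$ for some $\theta > 0$, i.e.\ whenever the upper intermediate-dimension function is continuous at $\theta = 0$ with limit strictly less than $\alpha d$. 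Theorem~\ref{t:int} supplies exactly this: for every $\theta \in (0, (d-1)t/d]$, $\uid F_{t,d} = d-1$, and $d-1 < \alpha d$ since $\alpha > (d-1)/d$. Applying Burrell's bound at any such $\theta$ then yields $\ubd B_\alpha(F_{t,d}) < d$ almost surely.

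The main (mild) obstacle is matching Burrell's precise quantitative bound to the present setting and identifying an explicit $\theta \in (0,(d-1)t/d]$ at which it drops strictly below $d$; this is a routine optimisation given Theorem~\ref{t:int}. All of the substantive work on the popcorn pyramids themselves is already contained in that theorem and in the elementary Hausdorff calculation, so the corollary is essentially a black-box transfer of those results through Kahane's and Burrell's theorems.
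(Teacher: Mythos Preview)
Your proposal is correct and follows essentially the same route as the paper: Kahane's formula from~\cite{Kahane1985} for the Hausdorff equalities (using $\dim_{\mathrm H} G_{t,d}=\dim_{\mathrm H} F_{t,d}=d-1$ and $\alpha>(d-1)/d$), and Burrell's result~\cite[Corollary~3.7]{Burrell2022brownian} together with the continuity at $\theta=0$ of the intermediate dimensions from Theorem~\ref{t:int} for the strict upper box-dimension bound. The only cosmetic difference is that the paper cites Burrell's Corollary~3.7 directly rather than phrasing it via the infimum bound, but the content is identical.
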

\begin{proof}
The value of the Hausdorff dimension of the fractional Brownian image is a direct consequence of Kahane's general results~\cite[Chapter~18]{Kahane1985}, since $G_{t,d}$ and $F_{t,d}$ are Borel. The box dimension result follows from Burrell's~\cite[Corollary~3.7]{Burrell2022brownian}, since the intermediate dimensions of $G_{t,d}$ and $F_{t,d}$ are continuous at $\theta = 0$ by Theorem~\ref{t:int}. 
\end{proof}
It is interesting to note that the condition $\alpha > (d-1)/d$ does not depend on $t$, even though the box dimension of the sets $G_{t,d}$ and $F_{t,d}$ does depend on $t$. 

Recall that for $\alpha \in (0,1]$ we say that a map is \emph{$\alpha$--H{\"o}lder} if that there exists $c>0$ such that $||f(x)-f(y)|| \leq c||x-y||^\alpha$ for all $x,y \in F$. 
Dimension theory can play a role in determining the H\"older distortion between sets; for a discussion of this topic, we refer the reader to~\cite[Section~17.10]{Fraser2020book}. The intermediate dimensions can often give better information than either the Hausdorff or box dimensions, for example in the case of Bedford--McMullen carpets (see~\cite[Example~2.12]{Banaji2021bedford}) and continued fraction sets (see~\cite[Example~4.5]{Banaji2021}). In fact, we show in Corollary~\ref{c:holder} that the same is true for the sets $G_{t,d}$ and $F_{t,d}$. 
To do so, we use the following simple estimate noted by Falconer~\cite[Section~2.1 5.]{Falconer2021intdimsurvey}: if $f \colon F \to \Rd$ is $\alpha$-H{\"o}lder, then $\uid f(F) \leq \alpha^{-1} \uid F$ and $\lid f(F) \leq \alpha^{-1} \lid F$. For further H{\"o}lder distortion estimates for the intermediate dimensions we refer the reader to~\cite[Theorem~3.1]{Burrell2022brownian} and~\cite[Section~4]{Banaji2020}. The bound achieved by different values of $\theta$ for a certain choice of parameters is shown in Figure~\ref{f:holder}. 

\begin{corollary}\label{c:holder}
Suppose $0 < t_1 < t_2 \leq d/(d - 1)$. Then if $f \colon G_{t_2,d} \to \mathbb{R}^{d}$ satisfies $f(G_{t_2,d}) \supseteq G_{t_1,d}$ and is $\alpha$-H\"older, then 
\[ \alpha \leq \frac{(d - 1)t_2 + t_1}{d t_2}. \]
The same holds if $G_{t_1,d}$ is replaced by $F_{t_1,d}$ or $G_{t_2,d}$ is replaced by $F_{t_2,d}$. 
\end{corollary}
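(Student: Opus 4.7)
The plan is to combine the H\"older distortion bound for intermediate dimensions quoted just before the corollary with Theorem~\ref{t:int}, evaluated at a carefully chosen $\theta$. Since $f$ is $\alpha$-H\"older and $f(G_{t_2,d}) \supseteq G_{t_1,d}$, monotonicity of $\uid$ under inclusion together with Falconer's estimate gives
\[ \uid G_{t_1,d} \leq \uid f(G_{t_2,d}) \leq \alpha^{-1} \uid G_{t_2,d} \]
for every $\theta \in (0,1]$, so the problem reduces to choosing $\theta$ so as to maximise $\uid G_{t_1,d}/\uid G_{t_2,d}$; by Theorem~\ref{t:int} the upper and lower intermediate dimensions agree for these sets, and I simply write $\dim_\theta$ for the common value.

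Since $t_1 < t_2$, the phase transition for $G_{t_1,d}$ occurs at $\theta = (d-1)t_1/d$, strictly before the one for $G_{t_2,d}$ at $\theta_\ast \coloneqq (d-1)t_2/d \leq 1$. I partition $(0,1]$ accordingly: on $(0,(d-1)t_1/d]$ both dimensions equal $d-1$, so the ratio is $1$; on $[(d-1)t_1/d,\theta_\ast]$ the ratio is $d^2\theta/((d-1)(d\theta+t_1))$, which a short derivative computation shows to be strictly increasing in $\theta$; and on $[\theta_\ast,1]$ both formulas are analytic and the ratio simplifies to $(d\theta+t_2)/(d\theta+t_1)$, which is strictly decreasing since $t_1 < t_2$. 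Hence $\theta_\ast$ is the unique optimum. The boundary case $t_2 = d/(d-1)$, in which $\theta_\ast = 1$, is handled identically after noting that $\dim_\theta G_{t_2,d} = d-1$ for all $\theta$ by sandwiching between the Hausdorff and Assouad dimensions via Theorem~\ref{t:assouad}.

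Finally, at $\theta = \theta_\ast$ one reads off $\dim_{\theta_\ast} G_{t_2,d} = d-1$ and
\[ \dim_{\theta_\ast} G_{t_1,d} = \frac{d^2 \theta_\ast}{d\theta_\ast + t_1} = \frac{d(d-1)t_2}{(d-1)t_2 + t_1}, \]
and rearranging the displayed inequality at $\theta = \theta_\ast$ yields $\alpha \leq ((d-1)t_2 + t_1)/(d t_2)$. Because $G_{t,d}$ and $F_{t,d}$ share the same intermediate dimensions, and the only place the specific sets enter the argument is via the inclusion $f(G_{t_2,d}) \supseteq G_{t_1,d}$, the same proof covers the variants with $F_{t_1,d}$ or $F_{t_2,d}$ in place. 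The only non-routine ingredient is the one-variable optimisation in $\theta$, and I anticipate no substantive obstacle beyond it.
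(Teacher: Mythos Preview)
Your proposal is correct and follows essentially the same route as the paper: apply Falconer's H\"older distortion estimate for $\uid$ together with Theorem~\ref{t:int} at $\theta_\ast = (d-1)t_2/d$, then read off the ratio. The paper's proof is a one-line computation at this specific $\theta_\ast$; your additional optimisation showing that $\theta_\ast$ is the unique maximiser of the ratio is not needed to establish the stated bound (any single $\theta$ suffices), though the paper does remark on this optimality separately after the proof. Your explicit treatment of the boundary case $t_2 = d/(d-1)$ via Theorem~\ref{t:assouad} is a nice touch, since Theorem~\ref{t:int} is only stated for strict inequality.
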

\begin{proof}
If $\theta = (d-1)t_2/d$ then 
\[ \alpha \leq \frac{\uid G_{t_2,d}}{\uid f(G_{t_2,d})} \leq \frac{\dim_{\theta} G_{t_2,d}}{\dim_{\theta} G_{t_1,d}} = \frac{(d - 1)t_2 + t_1}{d t_2}. \qedhere \]
\end{proof}
It is straightforward to see that the value of $\theta$ which gives the best bound for $\alpha$ in the proof of Corollary~\ref{c:holder} is indeed $\theta = (d-1)t_2/d$ (the largest value $\theta$ for which $\dim_\theta G_{t_2,d} = d-1$). It may be of interest to determine whether the bounds in Corollary~\ref{c:holder} are sharp, but we will not pursue this. 
It follows from Corollary~\ref{c:holder} (or directly from Theorem~\ref{t:box}) that if $0 < t_1 < t_2 \leq d/(d - 1)$ then $G_{t_1,d}$ and $G_{t_2,d}$ are not bi-Lipschitz equivalent. 

\begin{figure}[t]
\centering
\includegraphics[width=.9\textwidth]
        {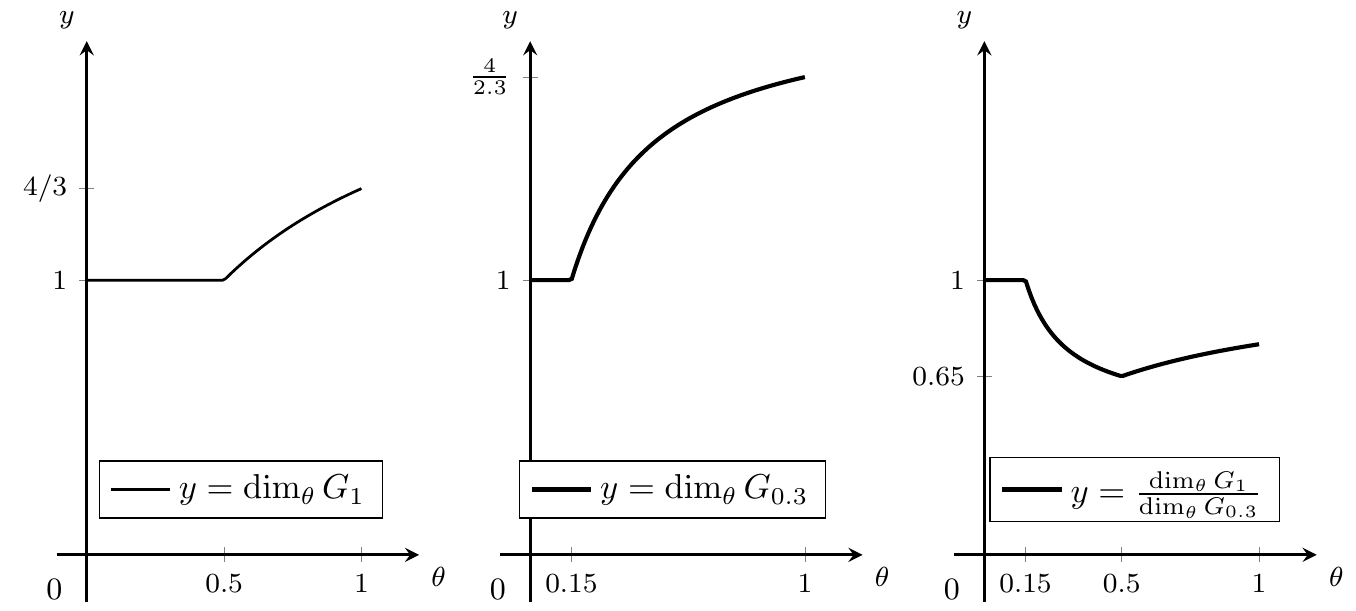}
        \caption{\label{f:holder}
        Graph of the intermediate dimensions of the popcorn sets from Figure~\ref{f:sets}, and their ratio (which gives upper bounds on the possible H\"older exponents of surjective maps from $G_{1,2}$ to $G_{0.3,2}$). 
 }
\end{figure}

\section{Preliminaries}\label{s:preliminaries}
We denote the cardinality of a set $S$ by $\# S$. Throughout, we write $a \lesssim b$ to mean $a \leq c b$ for some constant $c$ which may depend on $d$ or $t$ but is independent of other parameters (such as $\delta$) unless stated otherwise. For each $x > 0$, we denote 
\[ \lfloor x \rfloor \coloneqq \max \{ \, n \in \mathbb{N}^+ : n \leq x \, \}.\] 
 Let $\mathcal{L}_{n}$ be the Lebesgue measure on $\mathbb{R}^{n}$. All sets we consider will be non-empty, bounded subsets of Euclidean space. Given such a set $F$, we denote its diameter by $|F| \coloneqq \sup \{ \, ||x-y|| : x,y \in F \, \}$, where $|| \cdot ||$ is Euclidean distance. 
Let $N_\delta (F)$ denote the minimal number of closed cubes with side length $\delta$ needed to cover $F$. 
The \emph{upper} and \emph{lower box dimensions} are defined by 
\begin{equation}\label{e:boxdimdef}
 \ubd F \coloneqq \limsup_{\delta \to 0^+} \frac{\log N_\delta(F)}{-\log \delta}; \qquad \lbd F \coloneqq \liminf_{\delta \to 0^+} \frac{\log N_\delta(F)}{-\log \delta}.
 \end{equation}
If these coincide, then we simply refer to the \emph{box dimension} of $F$, denoted $\dim_{\mathrm B} F$. 
We can define the \emph{Hausdorff dimension} without using Hausdorff measure by 
\begin{align*}
\dim_\mathrm{H} F = \inf \{ \, s \geq 0 : &\mbox{ for all } \epsilon >0 \mbox{ there exists a finite or countable cover } \\*
&\{U_1,U_2,\ldots\} \mbox{ of } F \mbox{ such that } \sum_i |U_i|^s \leq \eps \,\}.
\end{align*}
This motivates the definition of the intermediate dimensions. For $\theta \in (0,1]$, the \emph{upper $\theta$-intermediate dimension} of $F$ is
\begin{align*} \uid F = \inf \{ \, s \geq 0 : &\mbox{ for all } \epsilon >0 \mbox{ there exists } \delta_0 \in (0,1] \mbox{ such that for all } \\*
&\delta \in (0,\delta_0) \mbox{ there exists a cover } \{U_1,U_2,\ldots\} \mbox{ of } F \\*
&\mbox{ such that } \delta^{1/\theta} \leq |U_i| \leq \delta \mbox{ for all } i, \mbox{ and } \sum_i |U_i|^s \leq \eps \, \}.
\end{align*}
Similarly the \emph{lower $\theta$-intermediate dimension} of $F$ is 
\begin{align*} \lid F = \inf \{ \, s \geq 0 : &\mbox{ for all } \epsilon >0 \mbox{ and } \delta_0 \in (0,1] \mbox{ there exists } \delta \in (0,\delta_0) \\* &\mbox{ and a cover } \{U_1,U_2,\ldots\} \mbox{ of } F \mbox{ such that } \delta^{1/\theta} \leq |U_i| \leq \delta \\* 
&\mbox{ for all } i, \mbox{ and } \sum_i |U_i|^s \leq \eps \,\}.
\end{align*}
If these coincide, then we refer to the \emph{intermediate dimension} of $F$, denoted $\dim_{\theta} F$. By definition, $\overline{\dim}_{1} = \ubd$ and $\underline{\dim}_{1} = \lbd$, and $\overline{\dim}_{0} = \underline{\dim}_{0} \coloneqq \dim_\mathrm{H}$. 

 Finally, the \emph{Assouad dimension} of $F \subseteq \Rd$ is defined by 
\begin{multline*}
      \dim_\mathrm{A} F = \inf\left\{
      \alpha : \mbox{ there exists }C>0\mbox{ such that for all } x \in F \mbox{ and } 
    \right. \\ \qquad \left. 0<r<R, \mbox{ we have } N_r(B(x,R)\cap F) \leq C(R/r)^\alpha\right\}. 
    \end{multline*}
    The following inequalities are always satisfied 
\begin{gather*}
 0 \leq \dim_\mathrm{H} F \leq \lid F \leq \uid F \leq \ubd F \leq \asd F \leq d \\
  \lid F \leq \lbd F \leq \ubd F. 
  \end{gather*}
It will be useful to recall the following general bounds for subsets of $\Rd$, proved in~\cite[Propositions~3.8 and~3.10]{Banaji2020}. We will see that these bounds are sharp for large $\theta$ for the families $G_{t,d}$ and $F_{t,d}$. 
If $0 < \theta \leq \phi \leq 1$, then 
\begin{equation}\label{e:generalbound}
\overline{\dim}_\theta F \leq \overline{\dim}_\phi F \leq \overline{\dim}_\theta F + \frac{\overline{\dim}_\theta F (\asd F - \overline{\dim}_\theta F )}{(\phi-\theta) \overline{\dim}_\theta F  + \theta \asd F}(\phi-\theta) . 
\end{equation}
Moreover, there is a lower bound for the intermediate dimensions in terms of the box and Assouad dimensions: 
\begin{equation}\label{e:generallower}
\overline{\dim}_\theta F \geq \frac{\theta\asd F \ubd F}{\asd F - (1-\theta)\ubd F}.
\end{equation}
Both bounds hold with $\overline{\dim}$ replaced by $\underline{\dim}$ throughout. It is clear that if $\asd F$ is replaced by $d$ throughout then the bounds will hold for all subsets of $\Rd$. 

In Section~\ref{s:proof1} we will use the Chung--Erd\H{o}s inequality (see \cite[equation~(4)]{Chung1952erdos} and \cite[equation~(6.10)]{Petrov1995}) which states that if $A_1,\dotsc,A_m$ are events in a probability space $(X, \mathcal{X}, \mu)$ and least one of them is positive, then  
\begin{equation}\label{e:chung}
 \mu\left(\bigcup_{i=1}^m A_i \right) \geq \frac{\left(\sum_{j=1}^m \mu(A_j)\right)^2}{\sum_{k=1}^m \sum_{l=1}^m \mu(A_k \cap A_l)}.
 \end{equation} 
We will also use the \emph{Euler totient function} from number theory, defined by 
\begin{equation}\label{e:defineeuler}
 \phi(n) \coloneqq \# \{ \, m : \gcd(m,n)=1, 1 \leq m < n \, \}.
 \end{equation}
Specifically we will use the following bound on its growth (see \cite[Theorem~2.9]{Montgomery2006}): for integers $n \geq 2$, 
\begin{equation}\label{e:totientbound}
\phi(n) \gtrsim \frac{n}{\log \log n},
\end{equation}
with the implicit constant independent of $n$. 
This bound suffices for our purposes because logarithmic factors will not affect dimension estimates.

\section{Lower bound for box dimension}\label{s:proof1}
The most challenging argument is the proof of the lower bound for box dimension, which follows a similar structure to the $d=2$ case in~\cite{Chen2021popcorn,Chen2022tpopcorn}, using tools from probability theory and number theory. In light of Theorem~\ref{t:assouad}, in this section we make the standing assumption that $0 < t < d/(d-1)$. We first introduce some notation and make some preliminary claims. 
It follows from the definition of $G_{t,d}$ that
\begin{align*}
G_{t,d} \cap ([0,1]^{d-1} \times (0,1]) = \bigcup_{n = 2}^\infty \Big\{ & \Big(\frac{m_1}{n}, \dots, \frac{m_{d-1}}{n}, \frac{1}{n^t}\Big) : \\*
& \qquad  \gcd{(m_i,n)}=1, 1\leq m_i <n, 1 \leq i \leq d-1 \Big\}.
\end{align*}
Given $0 < t < \frac{d}{d-1}$, $\delta > 0$ and $1 \leq k \leq \lfloor \delta^{-1} \rfloor + 1$, we denote 
\begin{equation}\label{e:definel}
l_{t}(k,\delta) \coloneqq \left\lfloor \left(\frac{1}{k\delta} \right)^{\frac{1}{t}} \right\rfloor.
\end{equation}
Given sufficiently small $0 < \delta < 1$, for each $k \geq 1$, we define the following sets of input values which give rise to \emph{horizontal layers of $G_{t,d}$} of height $\delta$: 
\begin{align*}
L_{t,d}(\delta,k) \coloneqq \bigcup_{n = l_t(k+1,\delta) + 1}^{l_t(k,\delta)} \Big\{ \Big(\frac{m_1}{n}, \dots, \frac{m_{d-1}}{n}\Big) : \gcd{(m_i,n)}=1, &1\leq m_i <n, \\*
&1 \leq i \leq d-1 \Big\}. 
\end{align*}
The following lemma gives some explanation to the choice of some parameters in the lemmas below.  
\begin{lemma}\label{epsilon}
For all $0 < \varepsilon < \frac{1}{16} \cdot \min \left\{  \left( \frac{dt}{t+d} - \frac{t}{t+1}\right), \left(1-\frac{dt}{d+t}\right)\right\} $, for all sufficiently small $\delta > 0$ and for all $\lfloor \delta^{\frac{dt}{t+d}-1-\varepsilon} \rfloor \leq k \leq \lfloor \delta^{\frac{t}{t+1}-1+\varepsilon} \rfloor$, 
\begin{enumerate}
\item There exists at least one $n \in \mathbb{N}$ such that $k\delta \leq \frac{1}{n^t} < (k+1)\delta$.
\item If $n$ is such that $k\delta \leq \frac{1}{n^t} < (k+1)\delta$ then $\frac{1}{n} > \delta$.
\end{enumerate}
\end{lemma}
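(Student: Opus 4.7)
The plan is to reduce both claims to simple inequalities involving $\delta$ and $k$, using the elementary observations that the condition $k\delta \leq 1/n^t < (k+1)\delta$ is equivalent to $((k+1)\delta)^{-1/t} < n \leq (k\delta)^{-1/t}$, and that the condition $1/n > \delta$ is equivalent to $n < 1/\delta$.

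For part (1), it suffices to show that the interval $I \coloneqq (((k+1)\delta)^{-1/t}, (k\delta)^{-1/t}]$ contains at least one integer. I would bound its length from below using the mean value theorem applied to $x \mapsto x^{-1/t}$ on $[k,k+1]$: this gives
\[
|I| = \delta^{-1/t}\bigl(k^{-1/t} - (k+1)^{-1/t}\bigr) \geq \frac{1}{t\,2^{(t+1)/t}}\, \delta^{-1/t} k^{-(t+1)/t}.
\]
Since $\frac{t}{t+1} - 1 = -\frac{1}{t+1}$, the assumption $k \leq \lfloor \delta^{t/(t+1)-1+\varepsilon}\rfloor$ gives $k \leq \delta^{-1/(t+1)+\varepsilon}$, hence $k^{-(t+1)/t} \geq \delta^{1/t - \varepsilon(t+1)/t}$. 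Substituting yields $|I| \gtrsim \delta^{-\varepsilon(t+1)/t}$, which exceeds $1$ for all sufficiently small $\delta$. In particular, $I$ contains an integer.

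For part (2), fix $n$ with $k\delta \leq 1/n^t$, so $n \leq (k\delta)^{-1/t}$. The inequality $n < 1/\delta$ thus follows once $(k\delta)^{-1/t} < 1/\delta$, equivalently $k > \delta^{t-1}$. If $t \geq 1$ this is immediate since $\delta^{t-1} \leq 1 \leq k$. If $t < 1$, then from the lower bound $k \geq \lfloor \delta^{dt/(d+t)-1-\varepsilon}\rfloor$ it suffices (for small $\delta$) to show that $dt/(d+t) - 1 - \varepsilon < t - 1$, that is, $dt/(d+t) < t + \varepsilon$. Since $d/(d+t) < 1$ we have $dt/(d+t) < t$, so the inequality holds with room to spare.

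The main thing to watch is that the range of admissible $k$ in the hypothesis is non-empty. This requires $\lfloor \delta^{dt/(d+t)-1-\varepsilon}\rfloor \leq \lfloor \delta^{t/(t+1)-1+\varepsilon}\rfloor$, i.e.\ (after comparing exponents of $\delta < 1$) $dt/(d+t) - t/(t+1) \geq 2\varepsilon$, which is guaranteed by the assumption $\varepsilon < \frac{1}{16}(dt/(d+t) - t/(t+1))$; the second clause in the definition of $\varepsilon$ plays no role here but will be needed in later lemmas. No genuine obstacle arises: the argument is entirely elementary, the only slightly delicate point being the mean-value estimate for $|I|$ in part (1), which determines exactly the correct exponent $t/(t+1) - 1$ appearing in the upper bound on $k$.
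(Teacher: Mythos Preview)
Your proof is correct and follows essentially the same approach as the paper: part~(1) in both cases is a mean value estimate (you apply it to $y\mapsto y^{-1/t}$ to show the interval of admissible $n$ has length $>1$, while the paper applies it to $n\mapsto n^{-t}$ to show consecutive gaps are $<\delta$, which is the dual formulation), and part~(2) is the same elementary inequality. The only cosmetic difference is that for part~(2) the paper writes $(k\delta)^{1/t} > \delta^{d/(d+t)} > \delta$ directly from the lower bound on $k$ and the fact that $d/(d+t)<1$, thereby avoiding your case split on whether $t\geq 1$.
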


\begin{proof}
~~

\begin{enumerate}
\item 
Suppose $\lfloor \delta^{\frac{dt}{t+d}-1-\varepsilon} \rfloor \leq k \leq \lfloor \delta^{\frac{t}{t+1}-1+\varepsilon} \rfloor$, and let $n \geq 2$ be such that $n^{-t} < k\delta$. Then since $k\delta < \delta^{\frac{t}{t+1}+ \frac{\varepsilon}{2}}$, by the mean value theorem we obtain 
\[
\frac{1}{(n-1)^t} - \frac{1}{n^t} \lesssim \frac{1}{n^{t+1}} < \delta.
\]
\item If $k\delta \leq \frac{1}{n^t} < (k+1)\delta$ then
\[
\frac{1}{n} \geq (k\delta)^{\frac{1}{t}} > \delta^{\frac{d}{t+d}} > \delta. \qedhere
\]
\end{enumerate}
\end{proof}

We now introduce the \emph{higher-dimensional Duffin--Schaeffer type estimate}. For each $F \subseteq \mathbb{R}^1$, we denote $(\mathcal{L}_1(F))^{d-1}$ by $\mathcal{L}_1^{d-1}(F)$. For each real function $\psi \colon \mathbb{N} \mapsto \mathbb{R}^1$, we write
\[
E^{(1)}(\psi(n),n) = \left\{ \left( \frac{m}{n} - \frac{\psi(n)}{n}, \frac{m}{n} + \frac{\psi(n)}{n}\right) : \gcd{(m,n)}=1, 1\leq m <n\right\}.
\]
It follows that $E(\psi(n), n)$ is a finite union of intervals with diameter $\frac{2\psi(n)}{n}$ provided $\psi(n) < 1/2$. For the $d$-dimensional case, we write
\begin{equation}\label{e:defineEsets}
\begin{aligned}
E^{(d-1)}(\psi(n),n) =  \Big\{& \left( \frac{m_1}{n} - \frac{\psi(n)}{n}, \frac{m_1}{n} + \frac{\psi(n)}{n}\right) \times \dots \\*
&\qquad \qquad \times  \left(   \frac{m_{d-1}}{n} - \frac{\psi(n)}{n}, \frac{m_{d-1}}{n} + \frac{\psi(n)}{n}\right)  : \\*
& \qquad \qquad \gcd{(m_i,n)}=1, 1\leq m_i <n, 1 \leq i \leq d-1 \Big\}.
\end{aligned}
\end{equation}

For the rest of this section, we consider $\psi(n) = n\delta$, and simplify $E^{(1)}(\psi(n),n)$ and $E^{(d-1)}(\psi(n),n)$ to $E^{(1)}(\delta,n)$ and $E^{(d-1)}(\delta,n)$ respectively. 
\begin{proposition}[Higher dimensional Duffin--Schaeffer type estimate]\label{higher_DS}
There exists a constant $c$ (depending only on $d$) such that for all distinct integers $q,k \geq 2$,  
\[
\mathcal{L}_{d-1}(E^{(d-1)}(\delta,q) \cap E^{(d-1)}(\delta,k)) \leq c \cdot (q \cdot k \cdot \delta^2)^{d-1}.
\]
\end{proposition}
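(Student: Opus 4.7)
The plan is to exploit the Cartesian product structure hidden in~\eqref{e:defineEsets}. Since the indices $m_1,\ldots,m_{d-1}$ vary independently, each over $\{m : \gcd(m,n)=1,\ 1\leq m<n\}$, the set $E^{(d-1)}(\delta,n)$ is precisely the $(d-1)$-fold Cartesian product $E^{(1)}(\delta,n)^{d-1}$. Consequently,
\[
E^{(d-1)}(\delta,q)\cap E^{(d-1)}(\delta,k) = \bigl(E^{(1)}(\delta,q)\cap E^{(1)}(\delta,k)\bigr)^{d-1},
\]
and Fubini gives
\[
\mathcal{L}_{d-1}\bigl(E^{(d-1)}(\delta,q)\cap E^{(d-1)}(\delta,k)\bigr) = \bigl(\mathcal{L}_1(E^{(1)}(\delta,q)\cap E^{(1)}(\delta,k))\bigr)^{d-1}.
\]
Thus the full statement reduces to the one-dimensional Duffin--Schaeffer type estimate $\mathcal{L}_1(E^{(1)}(\delta,q)\cap E^{(1)}(\delta,k)) \lesssim qk\delta^{2}$ for distinct $q,k\geq 2$, after which raising to the $(d-1)$-th power delivers the result with $c$ depending only on $d$.

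For the one-dimensional estimate I would argue combinatorially. Write the intersection as the union, over pairs $(m,m')$ with $\gcd(m,q)=\gcd(m',k)=1$, $1\leq m<q$, $1\leq m'<k$, of the overlaps $(m/q-\delta,m/q+\delta)\cap(m'/k-\delta,m'/k+\delta)$. Each overlap has Lebesgue measure at most $2\delta$, and is non-empty exactly when $|mk-m'q|<2qk\delta$; so it suffices to count such pairs.

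The key number-theoretic input is that $mk-m'q$ is always a multiple of $g:=\gcd(q,k)$, and is non-zero because $mk=m'q$ together with the two coprimality conditions would force $q=k$. For each admissible value $j=mk-m'q$, the congruence $mk\equiv j\pmod q$ has at most $g$ solutions $m\in\{1,\ldots,q-1\}$, each of which pins down $m'$ uniquely. Since the non-zero multiples of $g$ in the interval $(-2qk\delta,2qk\delta)$ number at most $4qk\delta/g$, the total number of contributing pairs is $\lesssim qk\delta$, whence the measure is $\lesssim qk\delta^{2}$.

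The only subtle point I anticipate is handling the regime $2qk\delta<g$: there is then no valid $j$ at all, so the measure is simply $0$ and the bound is trivially satisfied, but this must be recorded carefully so that the constant absorbs the boundary case. Everything else is routine, and alternatively one may cite the one-dimensional estimate directly from~\cite{Duffin1941schaeffer} (see also~\cite{Pollington1990duffinschaeffer}) and bypass the combinatorial argument entirely.
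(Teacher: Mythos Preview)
Your proposal is correct and follows essentially the same approach as the paper: both reduce to the one-dimensional Duffin--Schaeffer estimate via the Cartesian product structure $E^{(d-1)}(\delta,n)=\bigl(E^{(1)}(\delta,n)\bigr)^{d-1}$ and Fubini, and both note that the one-dimensional bound can be quoted from~\cite{Duffin1941schaeffer} (with a nod to~\cite{Pollington1990duffinschaeffer}). The only difference is that you additionally sketch a self-contained combinatorial proof of the one-dimensional bound, which is a nice bonus and is correct as written.
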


\begin{proof}
By the one-dimensional Duffin--Schaeffer estimate \cite[Lemma~2]{Duffin1941schaeffer}, 
\[
\mathcal{L}_1(E^{(1)}(\delta,q) \cap E^{(1)}(\delta,k)) \leq 4 \cdot q \cdot k \cdot \delta^2.
\] 
The desired estimate now follows from the fact that by the definition of Lebesgue measure on $\mathbb{R}^{d-1}$ and $E^{(d-1)}(\delta,q)$,
\[
\mathcal{L}_{d-1}(E^{(d-1)}(\delta,q) \cap E^{(d-1)}(\delta,k))  = \mathcal{L}_1^{d-1}(E^{(1)}(\delta,q) \cap E^{(1)}(\delta,k))
\]
(see also \cite{Pollington1990duffinschaeffer}). 
\end{proof}
The following lemma estimates $N_{\delta}(L_{t,d}(\delta,k))$.
\begin{lemma}\label{l:coveringlebesgue}
Fix $\varepsilon > 0$ as in Lemma~\ref{epsilon}. For all sufficiently small $\delta > 0$ and all $\lfloor \delta^{\frac{dt}{t+d}-1-\varepsilon} \rfloor \leq k \leq \lfloor \delta^{\frac{t}{t+1}-1+\varepsilon} \rfloor$, 
\[
\delta^{d-1} N_{\delta}(L_{t,d}(\delta,k)) \approx \mathcal{L}_{d-1} \left( \bigcup_{n = l_t(k+1,\delta) + 1}^{l_t(k,\delta)} E^{(d-1)}(\delta,n) \right).
\]
\end{lemma}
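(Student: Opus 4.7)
The plan is to realize both sides of the stated comparison as essentially the same geometric object, the $L^\infty$-$\delta$-neighbourhood of the discrete set $L_{t,d}(\delta,k)$ in $[0,1]^{d-1}$. The first observation is that, because $\psi(n)=n\delta$, each set $E^{(d-1)}(\delta,n)$ appearing in the union of the lemma is a finite disjoint union of axis-parallel open cubes of side length $2\delta$, centred at the points $(m_1/n,\dots,m_{d-1}/n)$ which (as $n$ ranges over $l_t(k+1,\delta)+1,\dots,l_t(k,\delta)$) constitute exactly the set $L_{t,d}(\delta,k)$. Thus
\[ \bigcup_{n=l_t(k+1,\delta)+1}^{l_t(k,\delta)} E^{(d-1)}(\delta,n) = \{\, x \in [0,1]^{d-1} : \lVert x-p\rVert_\infty < \delta \text{ for some } p \in L_{t,d}(\delta,k)\, \}. \]
Lemma~\ref{epsilon}(2) guarantees $1/n > \delta$ in the relevant range of $n$, which in particular gives $\psi(n)/n = \delta < 1/2$ for small $\delta$, so the cubes in~\eqref{e:defineEsets} are non-degenerate.

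For the upper bound $\mathcal{L}_{d-1}(\bigcup_n E^{(d-1)}(\delta,n)) \lesssim \delta^{d-1} N_\delta(L_{t,d}(\delta,k))$, I would take a minimal cover of $L_{t,d}(\delta,k)$ by $N_\delta(L_{t,d}(\delta,k))$ closed cubes of side $\delta$ and enlarge each by $\delta$ on every side to produce a cube of side $3\delta$. Any point within $L^\infty$-distance $\delta$ of $L_{t,d}(\delta,k)$ lies in at least one of these enlarged cubes, so the displayed union has measure at most $3^{d-1}\delta^{d-1} N_\delta(L_{t,d}(\delta,k))$.

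For the reverse inequality, I would fix the standard tiling of $\mathbb{R}^{d-1}$ by half-open $\delta$-cubes. If such a tile $Q$ contains a point $p\in L_{t,d}(\delta,k)$, then a coordinate-wise check (using only that $\lVert x-p\rVert_\infty \leq \delta$ for every $x\in Q$) shows $Q$ is contained in the cube of side $2\delta$ centred at $p$, which is itself a connected component of some $E^{(d-1)}(\delta,n)$. Since distinct tiles are disjoint, the measure of the union is at least $\delta^{d-1}$ times the number $M$ of tiles meeting $L_{t,d}(\delta,k)$, and $M$ differs from $N_\delta(L_{t,d}(\delta,k))$ by at most a multiplicative constant depending only on $d$ (standard relation between grid covers and minimal covers). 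The whole argument is essentially bookkeeping; I do not expect a genuine obstacle, since the only ingredient beyond the definitions is Lemma~\ref{epsilon}(2), which is exactly what licenses interpreting the $E^{(d-1)}(\delta,n)$ as genuine $L^\infty$-$\delta$-neighbourhoods of individual centres.
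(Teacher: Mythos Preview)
Your proposal is correct and is precisely the ``straightforward geometric argument'' the paper invokes without spelling out; the paper's own proof is a single sentence noting that the union is a finite union of cubes of side $2\delta$ and declaring the result immediate. Your write-up simply supplies the details (identifying the union as the $L^\infty$-$\delta$-neighbourhood of $L_{t,d}(\delta,k)$, then comparing its Lebesgue measure to the $\delta$-covering number via enlarged covers and mesh counts), so the approaches coincide.
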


\begin{proof}
Since $\bigcup_{n = l_t(k+1,\delta) + 1}^{l_t(k,\delta)} E^{(d-1)}(\delta,n)$ is a finite union of cubes in $\mathbb{R}^{d-1}$ with side length $2\delta$, the result follows by a straightforward geometric argument. 
\end{proof}

For the estimate of the lower bound of $N_{\delta}(L_{t,d}(\delta,k))$,
it follows from the Chung--Erd\H{o}s inequality~\eqref{e:chung} that
\begin{align}\label{e:chungerdosspecific}
\begin{split}
\mathcal{L}_{d-1} &\left(\bigcup\limits_{n = l_t(k+1,\delta) + 1}^{l_t(k,\delta)} E^{(d-1)}(\delta,n) \right) \\*
&\geq \frac{\left( \sum\limits_{n = l_t(k+1,\delta) + 1}^{l_t(k,\delta)} \mathcal{L}_{d-1} \left( E^{(d-1)}(\delta,n) \right) \right)^2}{\sum\limits_{n = l_t(k+1,\delta) + 1}^{l_t(k,\delta)} \sum\limits_{m = l_t(k+1,\delta) + 1}^{l_t(k,\delta)} \mathcal{L}_{d-1} \left(E^{(d-1)}(\delta,n) \cap E^{(d-1)}(\delta,m) \right) }.
\end{split}
\end{align}
The following lemmas now give estimates for the nominator and the denominator respectively of the right-hand side of the expression above.

\begin{lemma}\label{nominator}
Fix $\varepsilon > 0$ as in Lemma~\ref{epsilon}. For all sufficiently small $\delta > 0$ and all $\lfloor \delta^{\frac{dt}{t+d}-1-\varepsilon} \rfloor \leq k \leq \lfloor \delta^{\frac{t}{t+1}-1+\varepsilon} \rfloor$, 
\[
\delta^{d-1} \cdot \frac{1}{k^{\frac{d}{t}+1}\delta^{\frac{d}{t}}} \cdot \left( \frac{1}{\log \log \delta^{-\frac{d}{t+d}}} \right)^d \lesssim \sum\limits_{n = l_t(k+1,\delta) + 1}^{l_t(k,\delta)} \mathcal{L}_{d-1} \left( E^{(d-1)}(\delta,n) \right) \lesssim  \frac{\delta^{d-1}}{k^{\frac{d}{t}+1}\delta^{\frac{d}{t}}} .
\]
\end{lemma}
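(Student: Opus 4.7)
The plan is to reduce the Lebesgue measure to a combinatorial count, then estimate the number of summands and the typical value of $\phi(n)$ on the summation range, and finally apply the trivial upper bound $\phi(n)\leq n$ and the lower bound~\eqref{e:totientbound}. By definition~\eqref{e:defineEsets}, $E^{(d-1)}(\delta,n)$ is a union of $\phi(n)^{d-1}$ open cubes of side $2\delta$ with centres on the lattice $n^{-1}\mathbb{Z}^{d-1}$, so distinct centres differ by at least $1/n$ in some coordinate. For $n$ in the summation range we have $n\leq l_t(k,\delta)\leq (k\delta)^{-1/t}$, and the hypothesis $k\geq\lfloor\delta^{dt/(t+d)-1-\varepsilon}\rfloor$ gives $1/n\gtrsim \delta^{d/(t+d)-\varepsilon/t}$, which strictly exceeds $2\delta$ for small $\delta$ (since $d/(t+d)<1$ when $t<d/(d-1)$). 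Hence the cubes are pairwise disjoint and $\mathcal{L}_{d-1}(E^{(d-1)}(\delta,n))=(2\delta)^{d-1}\phi(n)^{d-1}$.

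Next, applying the mean value theorem to $x\mapsto(x\delta)^{-1/t}$ on $[k,k+1]$ gives
\[
l_t(k,\delta)-l_t(k+1,\delta)=t^{-1}\delta^{-1/t}k^{-(t+1)/t}+O(1).
\]
The upper constraint $k\leq\lfloor\delta^{t/(t+1)-1+\varepsilon}\rfloor$ forces the leading term to be at least $\delta^{-\varepsilon(t+1)/t}\to\infty$, so the $O(1)$ floor error is absorbed and the count of $n$ in the range is $\asymp \delta^{-1/t}k^{-(t+1)/t}$. Moreover, each such $n$ satisfies $n\asymp (k\delta)^{-1/t}$ with constants depending only on $t$, since the ratio $l_t(k,\delta)/l_t(k+1,\delta)$ tends to $((k+1)/k)^{1/t}\leq 2^{1/t}$ as $\delta\to 0$.

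For the upper bound, $\phi(n)\leq n\lesssim (k\delta)^{-1/t}$ directly yields
\[
\sum_n(2\delta)^{d-1}\phi(n)^{d-1}\lesssim \delta^{d-1}(k\delta)^{-(d-1)/t}\cdot\delta^{-1/t}k^{-(t+1)/t}=\frac{\delta^{d-1}}{k^{d/t+1}\delta^{d/t}}.
\]
For the lower bound, \eqref{e:totientbound} gives $\phi(n)\gtrsim n/\log\log n$, and since $n\leq\delta^{-d/(t+d)+\varepsilon/t}$ throughout the range, $\log\log n\leq \log\log\delta^{-d/(t+d)}+O(1)$. Combining with the count produces the desired lower bound (in fact with the stronger exponent $-(d-1)$ on the $\log\log$ factor, which exceeds $-d$ and hence implies the claim). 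The main technical obstacle is the bookkeeping ensuring that the floor discrepancies in $l_t(\cdot,\delta)$ are genuinely dominated by the leading-order terms, which is precisely what the $\varepsilon$-slack in the range of $k$ is designed to guarantee.
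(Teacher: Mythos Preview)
Your proof is correct and follows essentially the same approach as the paper: both compute $\mathcal{L}_{d-1}(E^{(d-1)}(\delta,n))\asymp \phi(n)^{d-1}\delta^{d-1}$ using the separation of the lattice centres, bound $\phi(n)$ above by $n$ and below via~\eqref{e:totientbound}, and handle the sum over $n$ via the mean value theorem applied to $x\mapsto (x\delta)^{-1/t}$. Your remark that the argument actually yields the sharper exponent $-(d-1)$ on the $\log\log$ factor is correct; the paper's own proof also produces $d-1$, and the stated exponent $d$ is simply a weakening.
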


\begin{proof}
For the upper bound, 
\begin{align*}
\sum\limits_{n = l_t(k+1,\delta) + 1}^{l_t(k,\delta)} \mathcal{L}^{d-1} \left( E^{(d-1)}(\delta,n) \right) &\lesssim \sum\limits_{n = l_t(k+1,\delta) + 1}^{l_t(k,\delta)} n^{d-1} \delta^{d-1} &\text{by~\eqref{e:defineEsets}}\\
&\lesssim \delta^{d-1} \cdot \left(\left(\frac{1}{k\delta}\right)^{\frac{d}{t}} - \left(\frac{1}{(k+1)\delta}\right)^{\frac{d}{t}} \right) &\text{by \eqref{e:definel}}\\
&\lesssim \delta^{d-1} \cdot \frac{1}{k^{\frac{d}{t}+1}\delta^{\frac{d}{t}}},
\end{align*}
with the last line by the mean value theorem. 
For the lower bound, it follows from Lemma~\ref{epsilon} that the distance between centres of cubes in $E^{(d-1)}(\delta,n)$ is larger than $\delta$. Therefore it follows from the bound~\eqref{e:totientbound} for the Euler totient function that
\begin{equation}\label{e:secondeulerestimate}
\sum\limits_{n = l_t(k+1,\delta) + 1}^{l_t(k,\delta)} \mathcal{L}_{d-1} \left( E_{(d-1)}(\delta,n) \right) \gtrsim \sum\limits_{n = l_t(k+1,\delta) + 1}^{l_t(k,\delta)} \left( n \delta \cdot \frac{1}{\log \log n}\right)^{d-1}.
\end{equation}
Since $\lfloor \delta^{\frac{dt}{t+d}-1-\varepsilon} \rfloor \leq k \leq \lfloor \delta^{\frac{t}{t+1}-1+\varepsilon} \rfloor$, if $n$ is such that $l_t(k+1,\delta) < n \leq l_t(k,\delta)$ then $\delta^{\frac{dt}{d+t}} < \frac{1}{n^t} < \delta^{\frac{t}{1+t}}$ so $\delta^{-\frac{1}{1+t}}\leq n \leq \delta^{-\frac{d}{d+t}}$, and hence 
\begin{equation}\label{e:loglogbound}
\frac{1}{\log \log n} \geq \frac{1}{\log \log \delta^{-\frac{d}{d+t}}}.
\end{equation}
Moreover, by the mean value theorem,
\begin{equation}\label{e:mvtbound}
\sum\limits_{n = l_t(k+1,\delta) + 1}^{l_t(k,\delta)} n^{d-1} \gtrsim  \left(\frac{1}{k\delta}\right)^{\frac{d}{t}} - \left(\frac{1}{(k+1)\delta}\right)^{\frac{d}{t}} \gtrsim  \frac{1}{k^{\frac{d}{t}+1}\delta^{\frac{d}{t}}}.
\end{equation}
Combining~\eqref{e:secondeulerestimate},~\eqref{e:loglogbound} and~\eqref{e:mvtbound} gives the desired result.
\end{proof}

\begin{lemma}\label{denominator}
Fix $\varepsilon > 0$ as in Lemma~\ref{epsilon}. For all sufficiently small $\delta > 0$ and all $\lfloor \delta^{\frac{dt}{t+d}-1-\varepsilon} \rfloor \leq k \leq \lfloor \delta^{\frac{t}{t+1}-1+\varepsilon} \rfloor$, 
\begin{equation*}
\sum\limits_{n = l_t(k+1,\delta) + 1}^{l_t(k,\delta)}  \sum\limits_{m = l_t(k+1,\delta) + 1}^{l_t(k,\delta)}  \mathcal{L}_{d-1} \left(E^{(d-1)}(\delta,n) \cap E^{(d-1)}(\delta,m) \right) \lesssim \delta^{d-1} \cdot \frac{1}{k^{\frac{d}{t}+1}\delta^{\frac{d}{t}}}.
\end{equation*}
\end{lemma}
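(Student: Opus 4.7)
The plan is to split the double sum according to whether $n = m$ or $n \neq m$, bound each contribution separately, and show that both are $\lesssim \delta^{d-1}/(k^{d/t+1} \delta^{d/t})$.

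For the diagonal $n = m$, a single term satisfies $\mathcal{L}_{d-1}(E^{(d-1)}(\delta,n)) \lesssim n^{d-1} \delta^{d-1}$ by~\eqref{e:defineEsets}. Summing over $l_t(k+1,\delta) < n \leq l_t(k,\delta)$ and invoking the upper bound already computed in Lemma~\ref{nominator} yields
\[
\sum_{n} \mathcal{L}_{d-1}(E^{(d-1)}(\delta,n)) \lesssim \delta^{d-1}\cdot \frac{1}{k^{\frac{d}{t}+1}\delta^{\frac{d}{t}}},
\]
which has the required form.

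For the off-diagonal part, I would apply Proposition~\ref{higher_DS} to each pair $n \neq m$ to obtain $\mathcal{L}_{d-1}(E^{(d-1)}(\delta,n)\cap E^{(d-1)}(\delta,m)) \lesssim (nm\delta^{2})^{d-1}$. The resulting double sum factorises as
\[
\delta^{2(d-1)} \Bigl( \sum_{n = l_t(k+1,\delta) + 1}^{l_t(k,\delta)} n^{d-1}\Bigr)^{2}.
\]
The inner sum was estimated in~\eqref{e:mvtbound} (via the mean value theorem) as $\lesssim 1/(k^{\frac{d}{t}+1}\delta^{\frac{d}{t}})$, so the off-diagonal contribution is bounded by $\delta^{2(d-1) - 2d/t}/k^{2d/t + 2}$.

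It remains to verify that this off-diagonal bound is absorbed by the diagonal target $\delta^{d-1 - d/t}/k^{d/t+1}$; equivalently, that $k^{d/t+1} \gtrsim \delta^{d-1-d/t}$. Since $t < d/(d-1)$ the exponent $d-1 - d/t$ is negative, so this is a genuine lower bound on $k$. The key arithmetic identity is
\[
\Bigl(\frac{dt}{d+t}-1\Bigr)\Bigl(\frac{d}{t}+1\Bigr) = d - 1 - \frac{d}{t},
\]
which shows that the lower endpoint $k \geq \lfloor \delta^{\frac{dt}{t+d}-1-\varepsilon} \rfloor$ from Lemma~\ref{epsilon} raises to the $(d/t+1)$-th power to give $\delta^{d-1-d/t - \varepsilon(d+t)/t}$; since this exponent is more negative than $d-1-d/t$, the required inequality holds for all $\delta$ small enough. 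The main obstacle is this exponent bookkeeping, and it is precisely the reason the range of $k$ in Lemma~\ref{epsilon} was chosen as it is: the left endpoint is exactly calibrated so that the contribution of the off-diagonal Duffin--Schaeffer terms does not exceed the diagonal contribution.
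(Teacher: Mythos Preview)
Your proposal is correct and follows essentially the same approach as the paper: split into diagonal and off-diagonal, bound the diagonal via Lemma~\ref{nominator}, bound the off-diagonal via Proposition~\ref{higher_DS} and the factorisation, then use the lower endpoint of the range of $k$ (and the identity $(\tfrac{dt}{d+t}-1)(\tfrac{d}{t}+1)=d-1-\tfrac{d}{t}$) to show the off-diagonal square is dominated by the diagonal term. One small slip: \eqref{e:mvtbound} is the \emph{lower} bound for $\sum n^{d-1}$; what you actually need is the matching \emph{upper} bound, which is the first displayed computation in the proof of Lemma~\ref{nominator} (same mean-value-theorem argument).
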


\begin{proof}
First note that 
\begin{align*}
 \sum\limits_{n = l_t(k+1,\delta) + 1}^{l_t(k,\delta)}  &\sum\limits_{m = l_t(k+1,\delta) + 1}^{l_t(k,\delta)} \mathcal{L}_{d-1} \left(E^{(d-1)}(\delta,n) \cap E^{(d-1)}(\delta,m) \right) \\
 &= \sum_{\substack{n,m = l_t(k+1,\delta) + 1 \\ m \neq n}}^{l_t(k,\delta)} \mathcal{L}_{d-1}  \left(E^{(d-1)}(\delta,n) \cap E^{(d-1)}(\delta,m) \right) \\*
 &\quad+ \sum\limits_{n = l_t(k+1,\delta) + 1}^{l_t(k,\delta)}\mathcal{L}_{d-1} \left( E^{(d-1)}(\delta,n) \right).
\end{align*}
By the upper bound of Lemma~\ref{nominator} and the range of $k$ under consideration 
\[ 
\sum\limits_{n = l_t(k+1,\delta) + 1}^{l_t(k,\delta)}\mathcal{L}_{d-1} \left( E^{(d-1)}(\delta,n) \right) \lesssim  \delta^{d-1} \cdot \frac{1}{k^{\frac{d}{t}+1}\delta^{\frac{d}{t}}} \lesssim 1.
\]
By the higher dimensional Duffin--Schaeffer type estimate (Proposition~\ref{higher_DS}), for all $m \neq n$, we obtain
\[
\mathcal{L}_{d-1}  \left(E^{(d-1)}(\delta,n) \cap E^{(d-1)}(\delta,m) \right) \lesssim (m\cdot n\cdot \delta^2)^{d-1}.
\]
Hence, again using the same calculation as the proof of the upper Lemma~\ref{nominator}, 
\begin{align*}
\sum_{\substack{n,m = l_t(k+1,\delta) + 1 \\ m \neq n}}^{l_t(k,\delta)} \mathcal{L}_{d-1}  \left(E^{(d-1)}(\delta,n) \cap E^{(d-1)}(\delta,m) \right) &\lesssim \left( \sum\limits_{n = l_t(k+1,\delta) + 1}^{l_t(k,\delta)} \delta^{d-1} \cdot n^{d-1} \right)^2 \\
&\lesssim \left( \delta^{d-1} \cdot \frac{1}{k^{\frac{d}{t}+1}\delta^{\frac{d}{t}}} \right)^2 \\
&\lesssim  \delta^{d-1} \cdot \frac{1}{k^{\frac{d}{t}+1}\delta^{\frac{d}{t}}},
\end{align*}
completing the proof of the bound. 
\end{proof}

Bringing together the above bounds, we obtain the following important estimate. 
\begin{lemma}\label{CE-final}
Fix $\varepsilon > 0$ as in Lemma~\ref{epsilon}. For all sufficiently small $\delta > 0$ and all $\lfloor \delta^{\frac{dt}{t+d}-1-\varepsilon} \rfloor \leq k \leq \lfloor \delta^{\frac{t}{t+1}-1+\varepsilon} \rfloor$, 
\[
 N_{\delta}(f_{t,d}(L_{t,d}(\delta,k))) \approx N_{\delta}(L_{t,d}(\delta,k)) \gtrsim \frac{1}{k^{\frac{d}{t}+1}\delta^{\frac{d}{t}}} \cdot \left( \frac{1}{\log \log \delta^{-\frac{d}{t+d}}} \right)^{2d}.
\]
\end{lemma}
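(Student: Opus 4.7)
The plan is to assemble the estimates already in hand into a single inequality; no new analytic input is required beyond Lemmas~\ref{l:coveringlebesgue},~\ref{nominator} and~\ref{denominator}. First I would apply Lemma~\ref{l:coveringlebesgue} to reduce estimating $N_\delta(L_{t,d}(\delta,k))$ to estimating the Lebesgue measure of $\bigcup_{n=l_t(k+1,\delta)+1}^{l_t(k,\delta)} E^{(d-1)}(\delta,n)$, and then invoke the Chung--Erd\H{o}s bound~\eqref{e:chungerdosspecific} to replace this measure by the ratio of the square of the single sum from Lemma~\ref{nominator} and the double sum from Lemma~\ref{denominator}.

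Substituting the lower bound of Lemma~\ref{nominator} (squared) into the numerator and the upper bound of Lemma~\ref{denominator} into the denominator, one factor of $\delta^{d-1}/(k^{d/t+1}\delta^{d/t})$ cancels between numerator and denominator, leaving exactly one such factor in the quotient together with $(\log\log \delta^{-d/(t+d)})^{-2d}$; the totient logarithm is squared when the sum is squared, which is why the final exponent is $2d$ rather than $d$. Dividing through by $\delta^{d-1}$ as dictated by Lemma~\ref{l:coveringlebesgue} then yields $N_\delta(L_{t,d}(\delta,k)) \gtrsim \frac{1}{k^{d/t+1}\delta^{d/t}}(\log\log\delta^{-d/(t+d)})^{-2d}$, the stated lower bound.

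For the first comparison $N_\delta(f_{t,d}(L_{t,d}(\delta,k))) \approx N_\delta(L_{t,d}(\delta,k))$, I would interpret $f_{t,d}(L_{t,d}(\delta,k))$ as the piece of the graph $G_{t,d}$ lying over $L_{t,d}(\delta,k)$, namely $\{(x,f_{t,d}(x)) : x \in L_{t,d}(\delta,k)\}$. By the choice of $l_t(k,\delta)$ in~\eqref{e:definel}, every $x \in L_{t,d}(\delta,k)$ satisfies $k\delta \leq f_{t,d}(x) < (k+1)\delta$, so this graph piece is contained in a single horizontal slab of height $\delta$. Since projection onto the first $d-1$ coordinates is $1$-Lipschitz, any $\delta$-cube cover of the graph piece projects to a $\delta$-cube cover of the base, and conversely any $\delta$-cube cover of the base lifts to an $O(1)$-times larger $\delta$-cube cover of the slab piece; hence the two covering numbers agree up to a constant depending only on $d$. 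The only real point of care in the whole argument is the exponent bookkeeping through the Chung--Erd\H{o}s quotient; everything else is a direct combination of previously established estimates.
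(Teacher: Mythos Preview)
Your proposal is correct and follows essentially the same route as the paper: reduce via Lemma~\ref{l:coveringlebesgue} to the Lebesgue measure of the union, apply the Chung--Erd\H{o}s inequality~\eqref{e:chungerdosspecific}, substitute the lower bound of Lemma~\ref{nominator} squared over the upper bound of Lemma~\ref{denominator}, cancel one factor of $\delta^{d-1}/(k^{d/t+1}\delta^{d/t})$, and divide by $\delta^{d-1}$. Your justification of $N_\delta(f_{t,d}(L_{t,d}(\delta,k)))\approx N_\delta(L_{t,d}(\delta,k))$ via the height-$\delta$ slab and Lipschitz projection is exactly the content of the paper's one-line remark that the strip has height~$\approx\delta$, just spelled out more carefully.
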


\begin{proof}
The first approximate equality is since the height of each strip $f_{t,d}(L_{t,d}(\delta,k))$ is $\approx \delta$. 
By Lemma~\ref{l:coveringlebesgue}, 
\[
N_{\delta}(L_{t,d}(\delta,k)) \approx \frac{1}{\delta^{d-1}} \cdot \mathcal{L}_{d-1} \left(\bigcup\limits_{n = l_t(k+1,\delta) + 1}^{l_t(k,\delta)} E^{(d-1)}(\delta,n) \right).
\]
Since $k > \delta^{\frac{dt}{t+d}-1}$, we have $k^{\frac{d}{t}+1}\delta^{\frac{d}{t}-d+1} > \delta^{d-\frac{t+d}{d} + \frac{t}{d} - d +1} = 1$. 
Therefore combining the Chung--Erd\H{o}s inequality~\eqref{e:chungerdosspecific} with Lemmas~\ref{nominator} and~\ref{denominator} gives 
\begin{equation*}
 \frac{1}{\delta^{d-1}} \cdot \mathcal{L}_{d-1} \left(\bigcup\limits_{n = l_t(k+1,\delta) + 1}^{l_t(k,\delta)} E^{(d-1)}(\delta,n) \right) \gtrsim \frac{1}{\delta^{d-1}} \cdot \frac{1}{k^{\frac{d}{t}+1}\delta^{\frac{d}{t}-d+1}} \cdot \left( \frac{1}{\log \log \delta^{-\frac{d}{t+d}}} \right)^{2d}.\qedhere
\end{equation*}
\end{proof}

We are now ready to give the proof of the lower bound for box dimension. 

\begin{proposition}\label{l:box}
If $0 < t < d/(d-1)$, then 
\[ \lbd G_{t,d} \geq \frac{d^2}{d+t}. \]
\end{proposition}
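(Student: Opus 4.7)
The plan is to combine Lemma~\ref{CE-final} over all admissible values of $k$, using that the layers $L_{t,d}(\delta,k)$ lift to horizontal strips of $G_{t,d}$ lying at heights $\approx k\delta$, which are disjoint for different $k$. More precisely, for a fixed $\varepsilon>0$ as in Lemma~\ref{epsilon}, the sets $f_{t,d}(L_{t,d}(\delta,k))$ are contained in $[0,1]^{d-1}\times[k\delta,(k+1)\delta)$, so any closed cube of side $\delta$ can meet at most boundedly many of them. Hence
\[ N_\delta(G_{t,d}) \gtrsim \sum_{k = \lfloor \delta^{dt/(t+d)-1-\varepsilon}\rfloor}^{\lfloor\delta^{t/(t+1)-1+\varepsilon}\rfloor} N_\delta\bigl(f_{t,d}(L_{t,d}(\delta,k))\bigr). \]

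Next I would insert the bound from Lemma~\ref{CE-final} and factor out the $\delta$- and log-log dependence to obtain
\[ N_\delta(G_{t,d}) \gtrsim \frac{1}{\delta^{d/t}}\Bigl(\log\log \delta^{-d/(t+d)}\Bigr)^{-2d} \sum_{k = \lfloor \delta^{dt/(t+d)-1-\varepsilon}\rfloor}^{\lfloor\delta^{t/(t+1)-1+\varepsilon}\rfloor} k^{-(d/t+1)}. \]
Since the exponent $d/t+1>1$, the series $\sum_{k\geq 1} k^{-(d/t+1)}$ converges, and the tail sum is dominated by the contribution of the smallest index; a standard integral comparison yields
\[ \sum_{k = \lfloor \delta^{dt/(t+d)-1-\varepsilon}\rfloor}^{\lfloor\delta^{t/(t+1)-1+\varepsilon}\rfloor} k^{-(d/t+1)} \gtrsim \bigl\lfloor \delta^{dt/(t+d)-1-\varepsilon}\bigr\rfloor^{-d/t} \gtrsim \delta^{d/t - d^2/(t+d) + \varepsilon d/t}. \]
Multiplying out, this gives
\[ N_\delta(G_{t,d}) \gtrsim \delta^{-d^2/(t+d) + \varepsilon d/t}\Bigl(\log\log \delta^{-d/(t+d)}\Bigr)^{-2d}. \]

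Finally I would take logarithms, divide by $-\log\delta$, and pass to $\liminf$ as $\delta\to 0^+$. The iterated-logarithmic factor contributes $O(\log\log\log(1/\delta)/\log(1/\delta))\to 0$, giving
\[ \lbd G_{t,d} \geq \frac{d^2}{t+d} - \frac{\varepsilon d}{t}. \]
Since $\varepsilon>0$ may be chosen arbitrarily small (the upper bound on $\varepsilon$ in Lemma~\ref{epsilon} is a fixed positive constant), the conclusion follows by letting $\varepsilon\to 0^+$.

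The only genuine step is the summation over $k$: the key analytic input is that the exponent $d/t+1$ exceeds $1$ precisely because of the assumption $t < d/(d-1)$ is not actually needed here, but the lower endpoint $k\sim \delta^{dt/(t+d)-1}$ is exactly the one producing the exponent $d^2/(t+d)$, matching the box-dimension formula. The main obstacle is verifying the disjointness/additivity of the covering numbers across strips, which is straightforward since the height of each strip is $\approx \delta$, and bookkeeping the log-log factor, which is harmless.
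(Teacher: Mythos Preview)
Your proposal is correct and follows essentially the same approach as the paper: sum the layer-wise covering number estimate from Lemma~\ref{CE-final} over the admissible range of $k$, use that the sum $\sum k^{-(d/t+1)}$ is dominated by its lower endpoint $k_1 \approx \delta^{dt/(t+d)-1-\varepsilon}$ via integral comparison, absorb the $\log\log$ factor, and let $\varepsilon \to 0$. The paper handles the $\log\log$ term by bounding $\log\log\delta^{-d/(t+d)} \leq \delta^{-\varepsilon}$ before taking the liminf, whereas you take the liminf directly; both are fine, and the minor arithmetic discrepancies in the $\varepsilon$-coefficients are irrelevant once $\varepsilon \to 0$.
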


\begin{proof}
The idea is to bound the covering number (up to constants) from below by the sum of the covering numbers of an appropriate selection of layers of equal height, each of which can in turn be bounded using Lemma~\ref{CE-final}. 
For all sufficiently small $\varepsilon > 0$ satisfying
\[
0 < \varepsilon < \frac{1}{16} \cdot \min{ \left\{ \left( \frac{dt}{t+d} - \frac{t}{t+1}\right), \left(1- \frac{dt}{t+d}\right) \right\} }
\]
and sufficiently small $\delta > 0$, 
\[
 \bigcup_{k = \lfloor \delta^{\frac{dt}{t+d}-1-\varepsilon} \rfloor}^{\lfloor \delta^{\frac{t}{t+1}-1+\varepsilon} \rfloor} f_{t,d}(L_{t,d}(\delta,k)) \subseteq G_{t,d}.
\] 
It follows that 
\begin{align*}
N_{\delta}(G_{t,d}) &\gtrsim  N_\delta \left( \bigcup_{k = \lfloor \delta^{\frac{dt}{t+d}-1-\varepsilon} \rfloor}^{\lfloor \delta^{\frac{t}{t+1}-1+\varepsilon} \rfloor}f_{t,d}(L_{t,d}(\delta,k)) \right) \\
&\gtrsim \sum\limits_{k = \lfloor \delta^{\frac{dt}{t+d}-1-\varepsilon} \rfloor}^{\lfloor \delta^{\frac{t}{t+1}-1+\varepsilon} \rfloor} N_\delta \left( f_{t,d}(L_{t,d}(\delta,k)) \right) \\
&\gtrsim \sum\limits_{k = \lfloor \delta^{\frac{dt}{t+d}-1-\varepsilon} \rfloor}^{\lfloor \delta^{\frac{t}{t+1}-1+\varepsilon} \rfloor} \frac{1}{k^{\frac{d}{t}+1}\delta^{\frac{d}{t}}} \cdot \left( \frac{1}{\log \log \delta^{-\frac{d}{t+d}}}\right)^{2d} &\text{by Lemma~\ref{CE-final}} \\
& \gtrsim \left( \frac{1}{\log \log \delta^{-\frac{d}{t+d}}} \right)^{2d} \cdot \delta^{-\left(\frac{dt}{d+t} - 1 - 2\varepsilon \right) \cdot \frac{d}{t} + \frac{d}{t}}.
\end{align*}
But there exists $\delta_0 > 0$ depending on $\epsilon,t,d$ such that for all $\delta \in (0,\delta_0)$, $\log \log\delta^{-\frac{d}{t+d}} \leq \delta^{-\epsilon}$. Therefore for all sufficiently small $\delta$, 
\[ 
N_{\delta}(G_{t,d}) \gtrsim \delta^{-\left( \frac{d^2}{t+d} - 2d(1+t^{-1})\epsilon \right)}.
\]
By definition~\eqref{e:boxdimdef}, 
\[
\lbd G_{t,d} \geq\frac{d^2}{t+d} - 2d(1+t^{-1})\epsilon,
\]
 and letting $\epsilon \to 0^+$ completes the proof. 
\end{proof}

\section{Proof of dimension results}\label{s:proof2}
Next, we use Proposition~\ref{l:box} to prove the lower bound for the intermediate dimensions. 
\begin{lemma}\label{l:lower}
If $0 < t < d/(d-1)$ and $(d-1)t/d \leq \theta < 1$ then 
\[ \lid G_{t,d} \geq \frac{d^2 \theta}{d\theta + t}. \]
\end{lemma}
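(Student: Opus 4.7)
The plan is to deduce this estimate from Proposition~\ref{l:box} together with the general lower bound~\eqref{e:generallower} in its form for the lower intermediate dimension,
$$
\lid F \;\geq\; \frac{\theta\,\asd F\,\lbd F}{\asd F - (1-\theta)\lbd F},
$$
valid for all $\theta \in (0,1]$ and all non-empty bounded $F \subseteq \Rd$.

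The first step is to observe that the right-hand side is monotonically increasing in $\lbd F$ and monotonically decreasing in $\asd F$; both are short differentiations, and the denominator is positive because $\lbd F \leq \asd F$. Consequently, I may replace $\asd F$ by any upper bound and $\lbd F$ by any lower bound while preserving the inequality.

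Accordingly, for $F = G_{t,d}$ I would substitute $\asd G_{t,d} \leq d$ (trivial from $G_{t,d} \subseteq \Rd$, though Theorem~\ref{t:assouad} in fact gives equality) together with $\lbd G_{t,d} \geq d^2/(d+t)$ from Proposition~\ref{l:box}. A short simplification of the denominator, $(d+t) - (1-\theta)d = d\theta + t$, then produces the claimed bound $d^2\theta/(d\theta+t)$. The hypothesis $\theta \geq (d-1)t/d$ is precisely the threshold at which this expression first rises to $d-1$; below the threshold the trivial Hausdorff bound $\lid G_{t,d} \geq \dim_{\mathrm H} G_{t,d} = d-1$ is sharper, which is why the statement restricts attention to this range.

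I do not anticipate any real obstacle in this argument, since the entire technical burden (Chung--Erd\H{o}s, the higher-dimensional Duffin--Schaeffer estimate, and the totient bound) has already been discharged in Proposition~\ref{l:box}, making the present lemma essentially a corollary combined with general dimension-interpolation machinery. If a direct proof bypassing~\eqref{e:generallower} were desired, the natural route would be to rerun the Section~\ref{s:proof1} argument for arbitrary covers $\{U_i\}$ with $|U_i| \in [\delta^{1/\theta},\delta]$: bin the sets by dyadic diameter, apply Lemma~\ref{CE-final} at each admissible scale to a suitable collection of layers $L_{t,d}(r,k)$, and optimize over the chosen scale. This recovers the same exponent but is strictly more work than invoking the general bound.
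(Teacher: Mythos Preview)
Your proposal is correct. However, it follows a different route from the paper's primary proof. The paper argues directly: assuming for contradiction that $\lid G_{t,d} < d^2\theta/(d\theta+t)$, it takes a near-optimal cover $\{U_i\}$ with diameters in $[\delta^{1/\theta},\delta]$, chooses the threshold scale $\delta^{\beta}$ with $\beta = (d+t)/(d\theta+t)$, fattens the sets with $|U_i|\leq \delta^{\beta}$ to balls of diameter $\delta^{\beta}$ and breaks the larger sets into $\lesssim \delta^{-d(\beta-\gamma_j)}$ balls of that same size, and then counts to obtain a $\delta^{\beta}$-cover of $G_{t,d}$ with fewer than $(\delta^{\beta})^{-d^2/(d+t)}$ elements, contradicting Proposition~\ref{l:box}. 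Your approach instead invokes the general interpolation bound~\eqref{e:generallower} (in its lower-dimension version, with $\asd$ replaced by $d$) and the monotonicity you describe, and then feeds in Proposition~\ref{l:box}. This is precisely the shortcut the paper itself flags immediately after its proof as ``a shorter, but less direct, way to prove Lemma~\ref{l:lower}''. The trade-off is clear: the paper's argument is self-contained and makes the mechanism (scale-splitting at the optimal exponent $\beta$) visible, whereas your route is quicker but relies on the black box~\eqref{e:generallower} from~\cite{Banaji2020}. Your closing remark about a direct proof via rerunning Section~\ref{s:proof1} at all scales in $[\delta^{1/\theta},\delta]$ is in the right spirit, though the paper's actual direct argument is more economical than that.
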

\begin{proof}
The idea is to transform a cover for $[\delta^{1/\theta},\delta]$ into a cover with balls of fixed size by `fattening' the smallest sets and breaking up the largest sets, and then use the formula for the box dimension. Fix $\theta \in ((d-1)t/d,1)$ and assume for contradiction that $\lid G_t < d^2 \theta /(d\theta + t)$. Then there exists $s \in (1, d^2 \theta /(d\theta + t))$ with $s < d^2 \theta /(d\theta + t)$ and a sequence of $\delta \to 0^+$ for which there exists a cover $\{U_i\}_{i \in I}$ of $G_t$ satisfying $\delta^{1/\theta} \leq |U_i| \leq \delta$ for all $i$, and $\sum_i |U_i|^s \leq 1$. Let $\beta \coloneqq (d+t)/(d\theta + t)$. We write $I = I_1 \cup I_2$ where 
\[ I_1 \coloneqq \{ \, i \in I : |U_i| \leq \delta^\beta \, \}; \qquad I_2 \coloneqq \{ \, i \in I : |U_i| > \delta^\beta \, \}. \] 
If $i \in I_1$ then we fix a ball $B_i$ of diameter $\delta^\beta$ such that $U_i \subseteq B_i$. 
For each $j \in I_2$, let $\gamma_j \in [1,\beta)$ be such that $|U_j| = \delta^{\gamma_j}$, so by a simple volume argument $U_j$ can be covered by a collection $S_j$ of balls of size $\delta^\beta$ with $\# S_j \lesssim \delta^{-d(\beta - \gamma_j)}$. 
Then 
\begin{equation}\label{e:cover}
 \{B_i\}_{i \in I_1} \cup \bigcup_{j \in I_2} S_j
 \end{equation}
is a cover of $G_t$. 
We will now bound the number of balls in this cover. 
First note that $\# I_1 \leq \# I \leq \delta^{-s/\theta}$. 
Also, 
\[\delta^{s + 2\beta - 2} \sum_{j \in I_2} \# S_j  
\lesssim \sum_{j \in I_2} \delta^{-d(\beta - \gamma_j) + s + d\beta - d}
\leq \sum_{j \in I_2} \delta^{\gamma_j s} = \sum_{j \in I_2} |U_j|^s \leq 1, \]
where the middle inequality holds since each $\gamma_j \geq 1$. 
Therefore the number of balls in the cover \eqref{e:cover} can be bounded above by 
\[ \# I_1 + \sum_{j \in I_2} \# S_j \lesssim \delta^{-s/\theta} + \delta^{-(s + d\beta - d)} \lesssim (\delta^{\beta})^{-\max\{s/(\theta \beta),(s + d\beta - d)/\beta\}}. \]
 But $\max\{s/(\theta \beta),(s + d\beta - d)/\beta\} < d^2/(t+d)$, contradicting Proposition~\ref{l:box}. 
\end{proof}

A shorter, but less direct, way to prove Lemma~\ref{l:lower} is to combine Proposition~\ref{l:box} with the bound~\eqref{e:generallower} with $\asd F$ replaced by $d$. 

Next, we give a direct proof of the upper bound for the intermediate dimensions. 

\begin{lemma}\label{l:upper}
If $0 < t < d/(d-1)$ and $(d-1)t/d \leq \theta \leq 1$ then  
\[ \uid F_{t,d} \leq \frac{d^2 \theta}{d\theta + t}. \] 
\end{lemma}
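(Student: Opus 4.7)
The plan is to build an explicit two-scale cover of $F_{t,d}$. Isolated ``tall'' points, corresponding to small denominators $q$, will be covered individually by cubes of the minimum admissible diameter $\delta^{1/\theta}$, while the remainder of the set --- namely the base $[0,1]^{d-1} \times \{0\}$ together with all ``short'' points having large $q$ --- lies inside a horizontal slab that will be covered by cubes of the maximum admissible diameter $\delta$. The threshold between small and large $q$ will be tuned so that the two contributions to the $s$-cost balance exactly at the target exponent $s = d^2\theta/(d\theta+t)$.

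Concretely, I would set $\beta \coloneqq d/(d\theta+t)$ and use the threshold $Q \coloneqq \lfloor \delta^{-\beta} \rfloor$. The hypothesis $\theta \geq (d-1)t/d$ rearranges to $\beta t \leq 1$, which guarantees $Q^{-t} \gtrsim \delta$; this is what makes the slab $[0,1]^{d-1} \times [0, Q^{-t}]$ thick enough to be covered by $\lesssim \delta^{-d} Q^{-t}$ cubes of side $\delta$, rather than just a single $(d-1)$-dimensional layer. Every point of $F_{t,d}$ with denominator $q > Q$ has last coordinate $q^{-t} < Q^{-t}$ and so lies in this slab, while there are at most $\sum_{q=2}^Q (q-1)^{d-1} \lesssim Q^d$ points with $q \leq Q$, each covered by a single cube of side $\delta^{1/\theta}$.

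With $s = d^2\theta/(d\theta+t)$, the resulting $s$-cost is
\[
\lesssim Q^{d} (\delta^{1/\theta})^{s} + \delta^{-d} Q^{-t} \delta^{s} \;\lesssim\; \delta^{-d\beta + s/\theta} + \delta^{t\beta + s - d},
\]
and a short algebraic check (substituting $\beta = d/(d\theta+t)$) shows both exponents equal zero, so the total cost is bounded by a constant independent of $\delta$. To promote this bounded-cost cover to the $\varepsilon$-cover demanded by the definition of $\uid$, I would run the same cover at the shifted exponent $s+\eta$ for arbitrary $\eta > 0$: since every covering set has diameter at most $\delta$, the cost becomes $\lesssim \delta^{\eta} \to 0$, and letting $\eta \to 0^+$ yields $\uid F_{t,d} \leq s$. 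The main delicate point is choosing the threshold $Q$ so that both scales genuinely sit in the admissible window $[\delta^{1/\theta},\delta]$; the hypothesis $\theta \geq (d-1)t/d$ is precisely what is needed for the slab thickness bound, and is therefore the feature of the assumption doing the essential work.
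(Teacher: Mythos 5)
Your proposal is correct and follows essentially the same approach as the paper: both split $F_{t,d}$ at the height threshold $\delta^{dt/(d\theta+t)}$ (your $Q^{-t}$ with $Q=\lfloor\delta^{-\beta}\rfloor$), cover the thin slab with $\delta$-cubes and the finitely many higher points individually with $\delta^{1/\theta}$-cubes, and verify that the $s$-cost at $s=d^2\theta/(d\theta+t)$ is $O(1)$. The only cosmetic difference is that you index the split by the denominator $q$ rather than by the height coordinate, and you make explicit the role of the hypothesis $\theta \geq (d-1)t/d$ in ensuring the slab is at least $\delta$ thick.
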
 
\begin{proof}
The idea is to fix a small number $\delta > 0$ and separate $F_t$ into two parts: 
\begin{align*} 
F_{t,d}^{(1)} &= F_{t,d} \cap ([0,1]^{d-1} \times [0,\delta^{dt/(d\theta + t)}]), \\*
F_{t,d}^{(2)} &= F_{t,d} \cap ([0,1]^{d-1} \times (\delta^{dt/(d\theta + t)},1]).
\end{align*}
Covering $[0,1]^{d-1} \times [0,\delta^{dt/(d\theta + t)}]$ with balls of size $\delta$ gives   
\[ N_\delta (F_{t,d}^{(1)}) \lesssim \delta^{-(d-1)} \delta^{dt/(d\theta + t) - 1} = \delta^{-d^2\theta/(d\theta+t)}. \]
It follows from a simple cardinality estimate that 
\[ \# F_{t,d}^{(2)} \lesssim (\delta^{-d/(d\theta + t)})^d = \delta^{-d^2/(d\theta + t)}. \]

We can cover each point in $F_{t,d}^{(2)}$ with a ball of size $\delta^{1/\theta}$ and the result now follows from the estimate  
\[ N_\delta (F_{t,d}^{(1)}) \cdot \delta^{d^2\theta/(d\theta + t)} + \# F_{t,d}^{(2)} \cdot (\delta^{1/\theta})^{d^2\theta/(d\theta + t)} \lesssim 1. \qedhere\]
\end{proof}

An alternative way to prove Lemma~\ref{l:upper} is to combine the simple special case 
\[ {\dim_{(d-1)t/d} F_{t,d} \leq d-1} \] with~\eqref{e:generalbound} with $\asd F$ replaced by $d$. 

We are finally ready to complete the proof of our dimension results Theorem~\ref{t:int} (from which Theorem~\ref{t:box} follows by setting $\theta=1$) and Theorem~\ref{t:assouad}. 
\begin{proof}[Proof of Theorem~\ref{t:int}]
Since the intermediate dimensions are clearly non-decreasing in $\theta$, it follows from Lemma~\ref{l:lower} that $\uid F_{t,d} \leq {\overline{\dim}}_{(d-1)t/d} F_{t,d} \leq d-1$ for all $\theta \in [0,(d-1)t/d]$. Moreover, $\lid G_{t,d} \geq \dim_{\mathrm H} G_{t,d} = d-1$ for all $\theta \in [0,1]$. Since $G_{t,d} \subset F_{t,d}$, we have $\lid G_{t,d} \leq \lid F_{t,d} \leq \uid F_{t,d}$ and $\lid G_{t,d} \leq \uid G_{t,d} \leq \uid F_{t,d}$, so Theorem~\ref{t:int} now follows from Lemmas~\ref{l:lower} and~\ref{l:upper}. 
\end{proof}

\begin{proof}[Proof of Theorem~\ref{t:assouad}]
The $t<d/(d-1)$ case follows by combining Theorem~\ref{t:int} with the general bound~\eqref{e:generallower}. Therefore it suffices to prove that $\asd F_{t,d} \leq d-1$ when $t \geq d/(d-1)$; we use similar methods to the $d=2$ case in~\cite{Chen2022tangent}. Indeed, for each $0 < r < R < 1$ there are unique $m,n \in \N$ with $m \geq n$ such that 
\[ 
\frac{1}{n} \leq R < \frac{1}{n-1}, \qquad \frac{1}{m} \leq r < \frac{1}{m-1}.
\]
Given $x = (x_1,\dotsc,x_d) \in \Rd$ with each $x_i \geq 0$, define 
\[
C(x,R) \coloneqq F_{t,d} \cap \prod_{i=1}^d [x_i, x_i + R].
\] 
Define 
\begin{align*}
C_1 &\coloneqq C\left( x,\frac{1}{n-1}\right) \cap \left(\mathbb{R}^{d-1} \times \left( \frac{1}{m},\infty\right)\right) \\*
 C_2 &\coloneqq C\left( x,\frac{1}{n-1}\right) \cap \left(\mathbb{R}^{d-1} \times \left[ 0,\frac{1}{m}\right]\right)
\end{align*}
to be regions separated by whether the last coordinate is larger or smaller than $1/m$. 
A simple counting argument shows that the number of points in $F_{t,d}$ whose last coordinate is greater than $1/m$ is $\lesssim m^{d/t}$, and that 
\[
N_{1/m}(C_1) \leq \# C_1 \lesssim \left(\frac{1}{n}\right)^{d-1} m^{d/t} \leq \frac{m^{d-1}}{n^{d-1}},
\]
where we used that $t \geq d/(d-1)$ in the last step. 
Moreover, covering $C_2$ using a $1/m$-mesh of cubes gives 
\[ 
N_{1/m}(C_2) \lesssim \left(\frac{1/(n-1)}{1/m}\right)^{d-1} \lesssim \left( \frac{m}{n}\right)^{d-1}.
\]
Therefore
\[ 
N_r(C(x,R)) \leq N_{1/m}(C(x,1/n)) \leq N_{1/m}(C_1) + N_{1/m}(C_2) \lesssim \left( \frac{m}{n}\right)^{d-1} \lesssim \left( \frac{R}{r} \right)^{d-1},
\]
so $\asd F_{t,d} \leq d-1$, as required. 
\end{proof}

\section*{Acknowledgements}
We thank Jonathan Fraser for pointing out that the bounds~\eqref{e:generalbound} and~\eqref{e:generallower} can be used to calculate the intermediate dimensions of the graph of the popcorn function, and for other helpful comments. We thank Kenneth Falconer and Alex Rutar for comments on a draft version of this article. 
AB was financially supported by a Leverhulme Trust Research Project Grant (RPG-2019-034). HC was supported by NSFC (No. 11871227) and Shenzhen Science and Technology Program (Grant No. RCBS20210706092219049).

\section*{References}

\printbibliography[heading=none]

\bigskip
\footnotesize
{\parindent0pt
\textsc{Amlan Banaji\\ School of Mathematics and Statistics \\ University of St Andrews \\ St Andrews, KY16 9SS, UK}\par\nopagebreak
\textit{Email:} \texttt{afb8@st-andrews.ac.uk}
}

\vspace{5 pt}

{\parindent0pt
\textsc{Haipeng Chen\\ College of Big Data and Internet \\ Shenzhen Technology University \\ Shenzhen, 518118, China}\par\nopagebreak
\textit{Email:} \texttt{hpchen0703@foxmail.com}
}

\end{document}